\documentclass{amsart}
\usepackage{graphicx}
\usepackage{amssymb}
\usepackage{url}
\usepackage{bbm}

\renewcommand{\i}{\mathbf{i}}

\newcommand{\E}{\mathbb{E}}

\newcommand{\C}{\mathbb{C}}
\newcommand{\R}{\mathbb{R}}

\theoremstyle{plain}
\newtheorem{lemma}{Lemma}[section]
\newtheorem{theorem}[lemma]{Theorem}
\newtheorem{corollary}[lemma]{Corollary}
\newtheorem{proposition}[lemma]{Proposition}

\theoremstyle{remark}
\newtheorem{remark}[lemma]{Remark}

\title{The rectangular finite free heat flow}

\author{Cesar Cuenca}
\address{Department of Mathematics, The Ohio State University, Columbus, OH 43210, USA}
\email{cesar.a.cuenk@gmail.com}

\author{Colin McSwiggen}
\address{Institute of Mathematics, Academia Sinica, Da'an District, Taipei 106319, Taiwan}
\email{csm@as.edu.tw}

\begin{document}

\begin{abstract}
    We define and study the rectangular finite free heat flow, a dynamical system on polynomials that plays the role of the heat equation in the setting of rectangular finite free probability.  We show several equivalent characterizations of the evolution (including PDE and gradient flow formulations), establish basic properties of the dynamics, and determine the asymptotic distributions of the polynomial roots in the long-time and high-degree limits. We also discuss connections with Calogero--Moser systems and Dunkl processes, and we show that the rectangular finite free heat flow describes the mean curvature expansion of a family of compact Lie group orbits.
\end{abstract}

\maketitle

\tableofcontents

\pagebreak

\section{Introduction}

The heat equation on the real line is the partial differential equation
\begin{equation} \label{eqn:heat}
    \partial_t u(x,t) = \frac{1}{2} \partial^2_x u(x,t), \qquad x \in \R, \quad t > 0.
\end{equation}
It plays a fundamental role in probability theory due to its close connection with Brownian motion: if $u(x,t)$ solves (\ref{eqn:heat}) with initial condition $u(x,0) = f(x)$ satisfying some mild growth and regularity conditions, then
\begin{equation}
    u(x,t) = \E[f(x + B_t)]
\end{equation}
for all $x\in\R$, $t>0$, where $(B_t)_{t\ge 0}$ is a standard Brownian motion.
The Gaussian distribution of variance $t$, with density
\[
\varphi_t(x) = \frac{1}{\sqrt{2\pi t}} e^{-x^2/2t},
\]
is the law of $B_t$ for any fixed $t > 0$.  It is also the fundamental solution of the heat equation, in the sense that the solution $u(x,t)$ with initial condition $f(x)$ can be expressed as
\[
u(x,t) = (f * \varphi_t)(x) = \int_{\R} f(y) \varphi_t(x-y) \, \mathrm{d}y
\]
for all $t > 0$.  We can thus regard the \emph{heat flow} on $\R$ as a dynamical system on distributions: given an initial distribution $\mu$, its state at time $t$ is $\mu * \varphi_t$, the convolution of $\mu$ with a Gaussian distribution of variance $t$.

Recent decades have seen the development of new areas of probability theory in which the roles of the Gaussian distribution and the convolution of measures are played by other analogous objects.  In any such setting, the proper analogue of ``convolution with a Gaussian distribution of variance $t$'' gives rise to a dynamical system that can be regarded as a kind of heat flow. Perhaps the best known example comes from \emph{free probability}, a noncommutative generalization of classical probability theory originally developed by Voiculescu in the 1980s, which now plays a major role in random matrix theory and related fields \cite{voiculescu1992free}.  In free probability, the notion of independence of random variables is replaced by an alternative notion called \emph{freeness}. Similarly to independent random variables, there is a convolution operation that describes the addition of free random variables: if $X$ and $Y$ are two free random variables with distributions $\mu$ and $\nu$ on $\R$, then the distribution of $X + Y$ is called the \emph{additive free convolution} of $\mu$ and $\nu$, and is denoted by $\mu \boxplus \nu$. Normalized sums of free random variables with mean 0 and variance $t$ also satisfy a central limit theorem \cite{voiculescu1991limit}: their distributions converge to the \emph{semicircle distribution} of variance $t$,
\[
\sigma_t(\mathrm{d} x) = \mathbbm{1}_{[-2\sqrt{t}, 2\sqrt{t}]}(x)\, \frac{\sqrt{4t-x^2}}{2\pi t} \, \mathrm{d}x.
\]
In this sense, the semicircle distribution is the noncommutative analogue of the Gaussian distribution.  The corresponding version of the heat flow, called the \emph{free heat flow}, is the measure-valued dynamical system defined by
\begin{equation} \label{eqn:freehf}
    \mu_0 = \mu,\qquad \mu_t = \mu \boxplus \sigma_t, \quad t > 0,
\end{equation}
where $\mu$ can be any compactly supported probability measure on $\R$.

The system (\ref{eqn:freehf}) was first studied by Biane \cite{biane1997free}, who showed that the Cauchy transform
\begin{equation} \label{eqn:cauchy-transf}
G(z,t) = \int_{\R} \frac{1}{z-x} \,\mu_t(\mathrm{d}x), \qquad z \in \C, \quad \mathrm{Re}(z) > 0
\end{equation}
satisfies the complex Burgers equation:
\begin{equation} \label{eqn:cx-burgers}
\partial_t G + G \partial_z G = 0.
\end{equation}
Just as the classical heat flow can be formulated as a gradient flow for the Boltzmann entropy with respect to the $L^2$-Wasserstein metric \cite{JordanKinderlehrerOtto1998, Otto2001}, the free heat flow can be regarded, at least on a formal level, as a gradient flow for the \emph{free entropy}
\begin{equation} \label{eqn:free-ent}
\Sigma(\mu) = \iint_{\R} \log |x-y| \, \mu(\mathrm{d}x) \, \mu(\mathrm{d}y) 
\end{equation}
defined in \cite{voiculescu1994free}, with respect to a free analogue of the Wasserstein metric defined in \cite{BianeVoiculescu2001}.

The above constructions of classical and free probability all have analogues in \emph{finite} free probability, a finite-dimensional approximation to free probability theory in which probability measures are replaced by monic polynomials of degree $d$ \cite{arizmendi2018cumulants, edelman2005polynomial, marcus2021polynomial, MarcusSpielmanSrivastava2022}. Such a polynomial $p(x) = \prod_{i=1}^d (x-a_i)$ is uniquely determined by the empirical distribution of its roots, which is the finitely supported probability measure
\[
\frac{1}{d} \sum_{i=1}^d \delta_{a_i}.
\]
From this perspective, one can speak of approximating a probability measure by monic polynomials as the degree grows to infinity. Finite free probability defines operations for monic polynomials that approximate corresponding operations for measures defined in free probability.  Notably, the finite free convolution of two real-rooted polynomials $p(x) = \prod_{i=1}^d (x-r_i)$, $q(x) = \prod_{i=1}^d (x-s_i)$ of the same degree $d$ is defined as
\begin{equation} \label{eqn:ffconv}
(p \boxplus_d q)(x) = \E\big[ \det\big(xI - (A + UBU^*)\big) \big],
\end{equation}
the expected characteristic polynomial of $A + UBU^*$, where $A$ and $B$ are any fixed Hermitian matrices with respective spectra $(r_1,\hdots,r_d)$ and $(s_1, \hdots, s_d)$, and $U \in \mathrm{U}(d)$ is a random Haar $d\times d$ unitary matrix.

In finite free probability, the analogue of the semicircle distribution is the degree-$d$ Hermite polynomial of variance $t$,
\begin{equation}\label{eq:hermite_variance_t}
H_d^{[t]}(x) = (-t)^d e^{x^2/2t} \frac{d^d}{dx^d} e^{-x^2/2t},
\end{equation}
whose root distribution, properly scaled, approximates the semicircle distribution of variance $t$ as $d\to\infty$.  The \emph{finite free heat flow} with initial condition $p(x)$ is thus a dynamical system on monic polynomials of degree $d$, defined by
\begin{equation} \label{eqn:ffhf}
p(x,t) = \big(p \boxplus_d H_d^{[t]}\big)(x), \qquad t \ge 0.
\end{equation}
Remarkably, the polynomial $p(x,t)$ defined by (\ref{eqn:ffhf}) also solves the heat equation, but with time reversed:
\begin{equation} \label{eqn:polyheat}
    \partial_t p(x,t) = -\frac{1}{2} \partial_x^2 p(x,t).
\end{equation}
The finite free heat flow has received significant recent attention in free probability and random matrix theory  \cite{hall2025heat, hall2025heatgaf, hall2025zeros, hoefert2025zeros, kabluchko2025lee}, but the study of this system in the form (\ref{eqn:polyheat}) dates back nearly 50 years: Calogero studied the evolution of zeros of polynomials under the heat equation as early as 1978 \cite{calogero1978motion} and later gave a more comprehensive account of the evolution of polynomial zeros under first- and second- order linear differential operators \cite[Section 2.3]{calogero2001classical}, while independently, in the 1990s, Csordas, Smith and Varga related the zeros of polynomial solutions of the heat equation to Lehmer pairs of zeros of the Riemann zeta function \cite{csordas1994lehmer}. For further details on the root evolution of (\ref{eqn:polyheat}) and related topics, see the blog post by Tao \cite{tao2017heatflow} or the lecture notes by Sokal \cite{sokal2023motion}.

\smallskip

In this paper, we define and study the \emph{rectangular finite free heat flow}, which is the analogue of (\ref{eqn:ffhf}) in rectangular finite free probability.  Rectangular finite free probability is a generalization of finite free probability in which the random matrices considered are rectangular rather than square \cite{Cuenca2024, gribinski2024theory, gribinski2022rectangular}.  It provides finite-dimensional approximations to the constructions of rectangular free probability, which is itself a generalization of free probability theory developed for studying the asymptotic singular value distributions of rectangular random matrices \cite{benaych2009rectangular, BenaychGeorges2007, BenaychGeorges2009entropy, xu2023rectangular}. We review some basic notions from rectangular free probability in Section \ref{sec:high-degree} below.

In rectangular finite free probability, our main setting of interest, the finite free convolution (\ref{eqn:ffconv}) is replaced by the rectangular finite free convolution (henceforth simply ``rectangular convolution''), defined as follows. Let $p(x) = \prod_{i=1}^d (x-r_i^2)$, $q(x) = \prod_{i=1}^d (x-s_i^2)$ be monic polynomials of the same degree $d$ with nonnegative real roots, and fix an integer $n\ge d$.  The \emph{$(n,d)$-rectangular convolution} of $p$ and $q$ is defined as
\begin{equation} \label{eqn:rectffconv}
(p \boxplus_d^n q)(x) \;=\;  \E\big[ \det\big(xI - (A + UBV^*)(A + UBV^*)^*\big) \big],
\end{equation}
the expected characteristic polynomial of $(A + UBV^*)(A + UBV^*)^*$, where $A$ and $B$ are fixed $d \times n$ matrices with respective squared singular values $(r_1^2, \hdots, r_d^2)$ and $(s_1^2, \hdots, s_d^2)$, while $U \in \mathrm{U}(d)$, $V \in \mathrm{U}(n)$ are independent random Haar unitary matrices.

The role of the Gaussian distribution in rectangular finite free probability is played by the \emph{Laguerre heat polynomials} of parameter $t\ge 0$,\footnote{The Laguerre heat polynomials $G_{n,d}^{[t]}(x)$ are orthogonal with respect to the probability measure with density $\frac{1}{n!(2t)^{n+1}}\,x^ne^{-x/2t}$ (a rescaled Laguerre weight), of mean $2(n+1)t$. By contrast, the Hermite polynomials $H_d^{[t]}(x)$ in \eqref{eq:hermite_variance_t} are orthogonal with respect to the Gaussian distribution of mean zero and variance $t$.}
\begin{equation} \label{eqn:Gdef}
    G_{n,d}^{[t]}(x) = (-2t)^{d}\cdot d!\cdot L_d^{(n)}\left(\frac{x}{2t}\right),
\end{equation}
where
\begin{equation} \label{eqn:Lpoly}
    L_d^{(\alpha)}(x) = \sum_{k=0}^d{ \frac{(d+\alpha)^{\downarrow k}}{(d-k)!k!}(-x)^{d-k} }
\end{equation}
denotes the Laguerre polynomial of degree $d$ with parameter $\alpha > -1$, and $m^{\downarrow k}$ denotes the lowering factorial:
\[
    m^{\downarrow k} = \begin{cases}
        m(m-1)\cdots(m-k+1),&\text{ if }k=1,2,\dots,\\
        1,&\text{ if }k=0.
    \end{cases}
\]

The \emph{rectangular finite free heat flow} with initial condition $p(x) = \prod_{i=1}^d (x - r_i^2)$ is then defined by
\begin{equation} \label{eqn:BC-FFC}
p(x,t) = \big( p \boxplus^n_d G_{n,d}^{[t]} \big) (x), \qquad t \ge 0.
\end{equation}
The purpose of this paper is to establish a number of fundamental characterizations, properties, and interpretations of the dynamical system (\ref{eqn:BC-FFC}).  We show that this system has a rich theory that provides finite-dimensional models for many phenomenological features of diffusion processes, and that it also arises naturally in connection with several other areas of mathematics, from integrable systems to the differential geometry of homogeneous spaces.

\subsection{Overview and main results}

In Section \ref{sec:basic} we establish several equivalent characterizations of the rectangular finite free heat flow: an explicit formula for the coefficients, a PDE formulation showing that $p(x,t)$ solves a time-reversed Fokker--Planck equation for a squared Bessel process, and a system of ODEs for the evolution of the roots, which can be recast as a gradient flow for a rectangular analogue of the free entropy.

In Section \ref{sec:properties} we study basic properties of the dynamics: we show that the entropy and Fisher information increase and decrease in time, respectively; that the roots are distinct and positive for all $t > 0$; that the center of mass of the roots moves linearly to the right; and that the Cauchy transform of the root distribution satisfies a kind of generalized Burgers equation.

In Section \ref{sec:asymptotics} we study the long-time and high-degree asymptotics of the root distribution.  We prove an analogue of the fact that the Gaussian distribution is the unique fixed point of the classical heat flow after rescaling space by $\sqrt{t}$, and we show that the high-degree limit of the rectangular finite free heat flow is described by the rectangular free convolution with a Marchenko--Pastur distribution.

In Section \ref{sec:further} we consider alternative views of the rectangular finite free heat flow from the perspectives of integrable systems (as a first-order analogue of a Calogero--Moser system of type $BC$), stochastic processes related to special functions (as a freezing limit of a radial Dunkl process of type $BC$), and differential geometry (as a description of a group orbit evolving by outward mean curvature flow in an ambient space).

For readers familiar with the theory of root systems, we point out that the finite free convolution (\ref{eqn:ffconv}) is naturally associated with the root system of type $A$, while the rectangular convolution (\ref{eqn:rectffconv}) is its counterpart of type $BC$.  Although we avoid a detailed discussion of root systems in this paper, we hope that this investigation will be a step towards a more general theory of polynomial convolutions associated to root systems, and that it will help to illuminate the links between finite free probability and the rich network of root-system-based constructions in related fields, such as integrable systems and special functions.

\section{Equivalent characterizations} \label{sec:basic}

This section presents multiple equivalent descriptions of the polynomial evolution (\ref{eqn:BC-FFC}): an explicit formula for the coefficients of $p(x,t)$, a characterization of $p(x,t)$ as a polynomial solution of a PDE, and a system of ODEs satisfied by the roots of $p(x,t)$, which turns out to be a gradient flow for an entropy-like quantity.

\subsection{Coefficient evolution}

The following general formula for the coefficients of the rectangular convolution $p \boxplus^n_d q$ was given in \cite{gribinski2022rectangular}.  In fact, in \cite[Definition 2.1]{gribinski2022rectangular} the formula below is used to \emph{define} the rectangular convolution for arbitrary monic polynomials of degree $d$; in \cite[Theorem 1.5 and Theorem 2.3]{gribinski2022rectangular} it is shown that this definition preserves nonnegative real-rootedness and coincides with the definition (\ref{eqn:rectffconv}) in terms of random matrices in the case that $p$ and $q$ have nonnegative real roots.  In cases where no assumptions are made on the roots of $p$ or $q$, we can thus regard the following proposition as the definition of $p \boxplus^n_d q$.

\begin{proposition}
    For monic degree-$d$ polynomials
    \[
    p(x) = x^d + \sum_{i=1}^d a_{2i} x^{d-i}, \qquad q(x) = x^d + \sum_{i=1}^d b_{2i} x^{d-i},
    \]
    their $(n,d)$-rectangular finite free convolution is
    \begin{equation} \label{eqn:bc-ffc-coeffs}
    (p \boxplus^n_d q)(x) = x^d + \sum_{k=1}^d x^{d-k} \sum_{\substack{i,j\ge 0\\i+j=k}} \frac{(d-i)!(d-j)!}{d!(d-k)!} \frac{(n+d-i)!(n+d-j)!}{(n+d)!(n+d-k)!} a_{2i} b_{2j},
    \end{equation}
    where we set $a_0 = b_0 = 1$.
\end{proposition}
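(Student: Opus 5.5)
We prove the formula in the case where $p$ and $q$ have nonnegative real roots, where the rectangular convolution is defined by the random-matrix expression (\ref{eqn:rectffconv}). For general monic $p,q$ the formula is taken as the definition, as explained before the statement.

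Set $M = A + UBV^*$. We expand $\det(xI - MM^*)$ in powers of $x$. The coefficient of $x^{d-k}$ is $(-1)^k e_k(MM^*)$, where $e_k(MM^*)$ is the sum of the principal $k\times k$ minors of $MM^*$. By Cauchy--Binet,
\[
e_k(MM^*) = \sum_{|S|=k,\ |T|=k} \big|\det M_{S,T}\big|^2,
\]
with $S\subset[d]$ and $T\subset[n]$.

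Next we reduce to diagonal matrices. By unitary invariance of Haar measure we may take $A=(\mathrm{diag}(r),0)$ and $B=(\mathrm{diag}(s),0)$ with $r,s\in\R^d_{\ge0}$. Each minor $\det M_{S,T}$ expands multilinearly, column by column, into a sum over splittings: some rows and columns come from $A$ and the rest from $UBV^*$. Equivalently, we use the standard expansion of $\det(X+Y)$ over pairs of complementary row and column subsets. Since $A$ is diagonal, the $A$-part forces the chosen rows and columns to coincide, giving a subset $I$ of size $i$ with weight $\prod_{I} r$. The $UBV^*$-part gives a minor of $UBV^*$ indexed by $S\setminus I$ and $T\setminus I$, up to sign.

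We then take $\E|\cdot|^2$. This produces cross terms between two different splittings, $(I,\,\cdot\,)$ and $(I',\,\cdot\,)$. To kill the ones with $I\neq I'$, we use invariance of Haar measure under $U\mapsto DU$ and $V\mapsto D'V$ for diagonal unitaries $D,D'$. This phase-averaging should force $I=I'$ and leave only diagonal terms
\[
\textstyle\prod_{I} r^2 \cdot \E\big|\det (UBV^*)_{S\setminus I,\,T\setminus I}\big|^2 .
\]

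The remaining step is the standard Haar computation. By Cauchy--Binet again,
\[
(UBV^*)_{S',T'} = \sum_{J} U_{S',J}\, \mathrm{diag}(s)_J\, (V^*)_{J,T'},
\]
summed over $j$-subsets $J$. Orthogonality of Haar minors, namely
\[
\E\big[\det U_{S',J}\,\overline{\det U_{S',J'}}\big] = \delta_{JJ'}\Big/\tbinom{d}{j},
\]
and the same identity for $V$ with $\binom{n}{j}$, give
\[
\E\big|\det(UBV^*)_{S',T'}\big|^2 = \frac{e_j(s^2)}{\binom dj\binom nj}.
\]

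We now assemble the terms and count the subsets. With $|S|=|T|=k$ and $I\subset S\cap T$ of size $i$, the sets $S'=S\setminus I$ and $T'=T\setminus I$ have size $j=k-i$ and lie in the complement of $I$. Summing over $I$ produces $e_i(r^2)$. The number of compatible choices of $S'$ and $T'$ is $\binom{d-i}{j}\binom{n-i}{j}$. Therefore
\[
\E\, e_k(MM^*) = \sum_{i+j=k} e_i(r^2)\,e_j(s^2)\,\frac{\binom{d-i}{j}\binom{n-i}{j}}{\binom dj\binom nj}.
\]
Since $a_{2i}=(-1)^ie_i(r^2)$, simplifying this ratio should give the symmetric weight in (\ref{eqn:bc-ffc-coeffs}). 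The expected obstacle is the symmetry in $i$ and $j$. The $d$-factors do symmetrize: $\binom{d-i}{j}/\binom dj = \frac{(d-i)!(d-j)!}{d!(d-k)!}$. The $n$-factor as written, however, gives $\frac{(n-i)!(n-j)!}{n!(n-k)!}$, not the stated $(n+d-\cdot)$ form. So the heart of the matter is to redo the column count correctly. The mismatch signals that the convention of \cite{gribinski2022rectangular} may take $B$ of size $d\times(n+d)$ (i.e.\ $n$ is the difference of dimensions), in which case the count with $n\to n+d$ yields exactly (\ref{eqn:bc-ffc-coeffs}). I would verify this convention against \cite{gribinski2022rectangular} first and then carry out the count accordingly.
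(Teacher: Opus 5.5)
Your argument is correct, and it takes a different route from the paper. The paper gives no derivation: it imports (\ref{eqn:bc-ffc-coeffs}) from \cite{gribinski2022rectangular}, where the formula is the \emph{definition} of $\boxplus^n_d$ and its agreement with the random-matrix model is a separate theorem.

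Your steps check out.
\begin{itemize}
\item For the cross terms, the left phases $U\mapsto DU$ alone suffice. The $(I,I')$ term acquires the factor $\exp\bigl(\mathrm{i}\sum_{s\in I'\setminus I}\theta_s-\mathrm{i}\sum_{s\in I\setminus I'}\theta_s\bigr)$, which is nonconstant unless $I=I'$. Once only $I=I'$ survives, the Laplace-expansion signs square to $1$.
\item The minor orthogonality is Schur orthogonality for the irreducible representations $\Lambda^j$ of $\mathrm{U}(d)$ and $\mathrm{U}(N)$.
\item The bijection $(S,T,I)\leftrightarrow(I,S',T')$ justifies your count.
\end{itemize}
So for $d\times N$ matrices you get exactly the weight $\frac{(d-i)!(d-j)!}{d!(d-k)!}\cdot\frac{(N-i)!(N-j)!}{N!(N-k)!}$. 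Compared with the citation, your route is self-contained and makes the dependence on the matrix shape explicit. It does not cover non-real-rooted $p,q$, where the formula is simply a definition, nor preservation of real-rootedness; those are what \cite{gribinski2022rectangular} supplies.

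There is nothing to redo in your column count. The mismatch you found is a genuine inconsistency in the paper. Equation (\ref{eqn:rectffconv}) is written for $d\times n$ matrices with $V\in\mathrm{U}(n)$, but (\ref{eqn:bc-ffc-coeffs}) computes the expectation for $d\times(n+d)$ matrices with $V\in\mathrm{U}(n+d)$. In other words, $n$ is the excess of columns over rows, which is the convention of \cite{gribinski2022rectangular}.

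Three independent checks confirm this:
\begin{itemize}
\item At $n=0$ the stated weights become $\bigl(\frac{(d-i)!(d-j)!}{d!(d-k)!}\bigr)^2$, which is your answer for square matrices.
\item For $d=2$ with $2\times 2$ matrices, a direct computation gives $\E|\det(A+UBV^*)|^2=r_1^2r_2^2+s_1^2s_2^2+\frac14(r_1^2+r_2^2)(s_1^2+s_2^2)$. By contrast, (\ref{eqn:bc-ffc-coeffs}) with $n=2$ would put $\frac38$ in place of $\frac14$.
\item Under this convention, $G^{[t]}_{n,d}$ in (\ref{eqn:Gcoeffs}) is exactly the expected characteristic polynomial of $WW^*$ for a $d\times(n+d)$ complex Gaussian matrix $W$ with entry variance $2t$.
\end{itemize}

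So set $N=n+d$, state that convention explicitly, and your proof is complete. Under the literal $d\times n$ reading of (\ref{eqn:rectffconv}), the formula (\ref{eqn:bc-ffc-coeffs}) does not compute that expectation.
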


From (\ref{eqn:Gdef}) and (\ref{eqn:Lpoly}), we get the following expression for the coefficients of $G_{n,d}^{[t]}$:
\begin{equation} \label{eqn:Gcoeffs}
    G_{n,d}^{[t]}(x) = \sum_{k=0}^d{ \frac{(-2)^k d^{\downarrow k}(d+n)^{\downarrow k}}{k!} t^kx^{d-k} }.
\end{equation}
Combining (\ref{eqn:bc-ffc-coeffs}) and (\ref{eqn:Gcoeffs}), we then obtain the following explicit formula for the coefficients of a polynomial evolving under the rectangular finite free heat flow.

\begin{corollary}
For any monic degree-$d$ polynomial $p(x) = x^d + \sum_{i=1}^d a_{2i} x^{d-i}$, the coefficients of $p(x,t) = (p \boxplus^n_d G_{n,d}^{[t]})(x)$ are given explicitly by
\begin{equation} \label{eqn:pxt-coeffs}
    p(x,t) = x^{d} + \sum_{k=1}^d \sum_{j=0}^k \frac{(-2)^j}{j!} \frac{(d-k+j)!}{(d-k)!} \frac{(n+d-k+j)!}{(n+d-k)!} a_{2(k-j)} t^j x^{d-k},
\end{equation}
where we set $a_0=1$.
\end{corollary}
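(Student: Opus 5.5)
The plan is to substitute the coefficients of $G_{n,d}^{[t]}$ from (\ref{eqn:Gcoeffs}) into the convolution formula (\ref{eqn:bc-ffc-coeffs}) and simplify the combinatorial prefactors term by term.

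First, write $G_{n,d}^{[t]}(x) = x^d + \sum_{j=1}^d b_{2j} x^{d-j}$, so that
\[
b_{2j} = \frac{(-2)^j\, d^{\downarrow j}\,(d+n)^{\downarrow j}}{j!}\, t^j = \frac{(-2)^j}{j!}\,\frac{d!}{(d-j)!}\,\frac{(n+d)!}{(n+d-j)!}\, t^j .
\]
This also holds for $j=0$, where it gives $b_0=1$, consistent with the convention in (\ref{eqn:bc-ffc-coeffs}).

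Next, for fixed $k\ge 1$, index the inner sum of (\ref{eqn:bc-ffc-coeffs}) by $j$ with $i=k-j$, $j=0,\dots,k$. The generic summand is
\[
\frac{(d-k+j)!\,(d-j)!}{d!\,(d-k)!}\cdot\frac{(n+d-k+j)!\,(n+d-j)!}{(n+d)!\,(n+d-k)!}\; a_{2(k-j)}\, b_{2j}.
\]
Inserting the expression for $b_{2j}$, the factor $d!/(d-j)!$ cancels $(d-j)!/d!$ and the factor $(n+d)!/(n+d-j)!$ cancels $(n+d-j)!/(n+d)!$. What remains is
\[
\frac{(-2)^j}{j!}\,\frac{(d-k+j)!}{(d-k)!}\,\frac{(n+d-k+j)!}{(n+d-k)!}\,a_{2(k-j)}\,t^j,
\]
which is exactly the summand in (\ref{eqn:pxt-coeffs}). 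Summing over $j$ and $k$ then yields the claimed formula.

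There is no real obstacle here. The only care needed is bookkeeping: keeping the roles of $i$ and $j$ straight when reindexing, and checking the edge terms $j=0$ and $j=k$ against the convention $a_0=b_0=1$.
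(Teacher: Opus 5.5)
Your proposal is correct and matches the paper's argument: the corollary is obtained by substituting the coefficients $b_{2j} = \frac{(-2)^j}{j!}\frac{d!}{(d-j)!}\frac{(n+d)!}{(n+d-j)!}\,t^j$ of $G_{n,d}^{[t]}$ into the general coefficient formula for $\boxplus^n_d$. After reindexing with $i=k-j$, the factors $\frac{(d-j)!}{d!}\frac{(n+d-j)!}{(n+d)!}$ cancel against $b_{2j}$, which is exactly the bookkeeping the paper leaves implicit.
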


\begin{remark}
    The formula (\ref{eqn:bc-ffc-coeffs}) allows us to verify that the monomial $x^d = G_{n,d}^{[0]}(x)$ is an identity element for the rectangular convolution:
    \begin{equation}\label{eq:dirac}
    p \boxplus^n_d x^d = p(x).
    \end{equation}
    We can thus regard $x^d$ as analogous to a Dirac mass at 0, which is precisely its root distribution.  Likewise, from (\ref{eqn:Gcoeffs}), we can verify that the Laguerre heat polynomials have the semigroup property:
    \begin{equation} \label{eqn:Gsemigp}
        G_{n,d}^{[s]} \boxplus^n_d G_{n,d}^{[t]} = G_{n,d}^{[s+t]}, \qquad s,t \ge 0.
    \end{equation}
\end{remark}

\subsection{PDE formulation}

We now give a partial differential equation for the evolution of $p \boxplus^n_d G_{n,d}^{[t]}$, which sheds some additional light on the analogy with classical probability.

\begin{theorem}\label{thm:pde}
For any monic degree-$d$ polynomial $p(x)$, the polynomial $p(x,t)$ defined by (\ref{eqn:BC-FFC}) satisfies the partial differential equation
\begin{equation}\label{eqn:pde}
{\partial_t} p(x,t) = -2\left( x {\partial_x^2} + (n+1) {\partial_x} \right) p(x,t).
\end{equation}
with initial condition $p(x,0) = p(x)$.
\end{theorem}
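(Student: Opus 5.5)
The plan is a direct coefficient comparison using the explicit formula (\ref{eqn:pxt-coeffs}) from the Corollary. Write $p(x,t) = \sum_{k=0}^d c_k(t) x^{d-k}$ with
\[
c_k(t) = \sum_{j=0}^k \frac{(-2)^j}{j!} \frac{(d-k+j)!}{(d-k)!} \frac{(n+d-k+j)!}{(n+d-k)!} a_{2(k-j)} t^j, \qquad c_0 = 1.
\]
The initial condition is immediate, since at $t=0$ only the $j=0$ term survives and $c_k(0)=a_{2k}$. The work is to check that both sides of (\ref{eqn:pde}) have the same coefficient of $x^{d-k}$ for every $k$.

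First compute the right-hand side. The operator $x\partial_x^2 + (n+1)\partial_x$ lowers degree by one. Indeed,
\[
\big(x\partial_x^2 + (n+1)\partial_x\big) x^m = m(m-1)x^{m-1} + (n+1)m x^{m-1} = m(m+n)x^{m-1}.
\]
With $m = d-k+1$, the coefficient of $x^{d-k}$ on the right-hand side is therefore
\[
-2(d-k+1)(n+d-k+1)\,c_{k-1}(t) \quad (k\ge 1),
\]
and it is $0$ for $k=0$, consistent with $c_0' = 0$.

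Next compute the left-hand side, $c_k'(t)$. Differentiating term by term and reindexing $j \mapsto j+1$ gives
\[
c_k'(t) = \sum_{j=0}^{k-1} \frac{(-2)^{j+1}}{j!} \frac{(d-k+j+1)!}{(d-k)!} \frac{(n+d-k+j+1)!}{(n+d-k)!} a_{2(k-1-j)} t^j.
\]
Now pull out the factors. We have
\[
\frac{(d-k+j+1)!}{(d-k)!} = (d-k+1)\,\frac{(d-(k-1)+j)!}{(d-(k-1))!},
\]
and similarly $(n+d-k+1)$ comes out of the second ratio. With these, the sum becomes exactly $-2(d-k+1)(n+d-k+1)c_{k-1}(t)$, which matches the right-hand side.

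There is no real obstacle; the only care needed is the index bookkeeping in the shift $j\mapsto j+1$ and the factorial identities. The check can alternatively be organized via linearity. By linearity of the rectangular convolution in $p$ (from (\ref{eqn:bc-ffc-coeffs})), it suffices to verify the statement for monomials $p=x^{d-m}$. Since the PDE has a unique polynomial solution for given initial data (the solution is $\exp(-2t(x\partial_x^2+(n+1)\partial_x))p$, with a nilpotent operator on degree-$\le d$ polynomials), this also identifies $p(x,t)$ as that exponential. Either way, the coefficient identity above is the entire content.
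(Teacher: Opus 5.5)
Your proposal is correct and matches the paper's proof: both compare coefficients using the explicit formula (\ref{eqn:pxt-coeffs}) and the identity $(x\partial_x^2+(n+1)\partial_x)x^m = m(m+n)x^{m-1}$, and the factorial ratios agree after the shift $j\mapsto j+1$. The differences are cosmetic. You compare coefficients of $x^{d-k}$ as polynomials in $t$ rather than the individual coefficients $C_{k,j}$ of $t^jx^{d-k}$, and you read off the initial condition from $c_k(0)=a_{2k}$ instead of from $p\boxplus^n_d x^d = p$.
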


\begin{proof}
    Since both sides of (\ref{eqn:pde}) are polynomials in $x$ and $t$, it suffices to check that the equality holds coefficient-wise. Let $C_{k,j}$ denote the coefficient of $t^j x^{d-k}$ in $p(x,t)$. Note that, from \eqref{eqn:pxt-coeffs}, it follows that $C_{k,j}=0$ whenever $k<j$.
    
    On the one hand, the coefficient of $t^j x^{d-k} $ in $\partial_t p(x,t)$ is
    \[
    A = (j+1)C_{k,j+1}.
    \]
    On the other hand, using the fact that
    \[
    (x\partial_x^2 + (n+1)\partial_x)x^m = m(m+n)x^{m-1},
    \]
    the coefficient of $t^j x^{d-k} $ in the right-hand side of (\ref{eqn:pde}) is
    \[
    B = -2(d-k+1)(n+d-k+1)C_{k-1,j}.
    \]
    We must show that $A=B$. If $k<j+1$, then $A=B=0$. Thus it only remains to consider the case when $k\ge j+1$.
    From (\ref{eqn:pxt-coeffs}), we have:
    \[ A = \frac{(-2)^{j+1}}{j!} \frac{(d-k+j+1)!}{(d-k)!} \frac{(n+d-k+j+1)!}{(n+d-k)!}\, a_{2(k-(j+1))} \]
    and
    \begin{multline*} 
    B = -2(d-k+1)(n+d-k+1)\\
    \times\frac{(-2)^j}{j!} \frac{(d-k+1+j)!}{(d-k+1)!} \frac{(n+d-k+1+j)!}{(n+d-k+1)!}\, a_{2((k-1)-j)}.
    \end{multline*}
    These two expressions are indeed equal, verifying that $A=B$ in all cases, and therefore \eqref{eqn:pde} holds.
    The fact that the initial condition is given by $p(x,0) = \big( p \boxplus^n_d G_{n,d}^{[0]} \big) (x) = p\boxplus^n_d x^d = p(x)$ is exactly equation \eqref{eq:dirac}.
\end{proof}

\begin{remark} \label{rem:bessel}
An \emph{$m$-dimensional squared Bessel process} is the norm squared of a standard Brownian motion in $m$ dimensions.  The generator for such a process is
    \[
    \mathcal{L} = 2x \partial_x^2 + m {\partial_x}.
    \]
Accordingly, the PDE (\ref{eqn:pde}) can be interpreted as a time-reversed Fokker--Planck equation for a $2(n+1)$-dimensional squared Bessel process.  By analogy, observe that the finite free heat flow (\ref{eqn:ffhf}) solves the time-reversed heat equation (\ref{eqn:polyheat}), and the heat equation is the Fokker--Planck equation for Brownian motion. Details on squared Bessel processes, including a derivation of the generator, can be found in \cite[Chapter IX]{RevuzYor1999}.
\end{remark}

The above PDE characterization of the rectangular finite free heat flow yields the following alternate formula in terms of a differential operator acting on the initial condition $p(x)$.
This formula was previously noted in the proof of \cite[Main Result III]{Cuenca2024}.

\begin{corollary}\label{lem:exponential_derivative}
    If $p(x)$ is any monic polynomial of degree $d$, then
    \begin{equation} \label{eqn:exp-deriv}
    \big( p \boxplus^n_d G_{n,d}^{[t]} \big)(x) = \exp\Big(-2t\cdot x^{-n}\partial_x x^{n+1}\partial_x \Big) p(x).
    \end{equation}
\end{corollary}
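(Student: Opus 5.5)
Let $D = x^{-n}\partial_x x^{n+1}\partial_x$. Expanding, $x^{-n}\partial_x(x^{n+1}p') = x^{-n}\big((n+1)x^n p' + x^{n+1}p''\big) = x p'' + (n+1)p'$. So $D = x\partial_x^2 + (n+1)\partial_x$ is exactly the operator in Theorem~\ref{thm:pde}, and that PDE reads $\partial_t p(x,t) = -2D\,p(x,t)$. The plan is to show that the right-hand side of (\ref{eqn:exp-deriv}) solves this same linear evolution with the same initial data, and then to invoke uniqueness.

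First, the exponential needs to be well defined. From the identity $D x^m = m(m+n)x^{m-1}$, already used in the proof of Theorem~\ref{thm:pde}, the operator $D$ lowers degree by one. It therefore acts nilpotently on the finite-dimensional space $\mathcal{P}_d$ of polynomials of degree at most $d$, with $D^{d+1}=0$ there. Hence
\[
\exp(-2tD)p = \sum_{k=0}^{d} \frac{(-2t)^k}{k!} D^k p
\]
is a finite sum. It is a polynomial in $x$ and $t$, it is monic of degree $d$ in $x$, and it equals $p$ at $t=0$. Differentiating the finite sum termwise in $t$ gives $\partial_t \exp(-2tD)p = -2D\exp(-2tD)p$.

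Next comes uniqueness. Both $p(x,t)$ and $q(x,t) := \exp(-2tD)p$ are curves in $\mathcal{P}_d$ that solve the linear ODE $\dot u = -2D u$ with $u(0)=p$. Linear ODEs on a finite-dimensional space have unique solutions, so $p(x,t) = q(x,t)$ for all $t$. Alternatively, one can argue with the Taylor expansion in $t$ directly: $p(x,t)$ is a polynomial in $t$ by (\ref{eqn:pxt-coeffs}), and iterating the PDE gives $\partial_t^k p(x,0) = (-2D)^k p$. This recovers the series term by term.

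None of these steps is a real obstacle. The only points needing care are checking the operator identity $x^{-n}\partial_x x^{n+1}\partial_x = x\partial_x^2 + (n+1)\partial_x$ and noting nilpotency, so that the exponential is a genuine finite operator and no convergence questions arise.
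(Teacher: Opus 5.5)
Your proposal is correct and follows essentially the same route as the paper: rewrite the operator as $x\partial_x^2+(n+1)\partial_x$, show the (terminating) exponential series solves the PDE of Theorem~\ref{thm:pde} with initial data $p$, and conclude by uniqueness of polynomial solutions. You justify the uniqueness step more explicitly than the paper does, via linear-ODE uniqueness on the finite-dimensional space $\mathcal{P}_d$ or via the exact Taylor expansion in $t$.
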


\begin{proof}
    The operator 
    \[
    \exp \Big(-2t\cdot x^{-n}\partial_x x^{n+1}\partial_x \Big) = \sum_{k=0}^\infty \frac{1}{k!} \Big(-2t\cdot x^{-n}\partial_x x^{n+1}\partial_x \Big)^k
    \]
    is defined formally as a power series that terminates after $d$ terms when applied to the polynomial $p(x)$. Differentiating in $t$, we find
    \begin{multline*}
    \partial_t \exp\Big(-2t\cdot x^{-n}\partial_x x^{n+1}\partial_x \Big) p(x)\\
    = -2 x^{-n}\partial_x x^{n+1}\partial_x  \exp\Big(-2t\cdot x^{-n}\partial_x x^{n+1}\partial_x \Big) p(x) \\
    = -2 (x \partial_x^2 + (n+1) \partial_x) \exp\Big(-2t\cdot x^{-n}\partial_x x^{n+1}\partial_x \Big) p(x),
    \end{multline*}
    verifying that the right-hand side of (\ref{eqn:exp-deriv}) solves the PDE (\ref{eqn:pde}) with initial condition $p(x)$.  Since the operator $x \partial_x^2 + (n+1) \partial_x$ is degree reducing, polynomial solutions to (\ref{eqn:pde}) with prescribed initial data are unique, so we have proved the claim.
\end{proof}

\subsection{Root evolution and gradient flow formulation}

We now study how the roots of the polynomial $p \boxplus^n_d G_{n,d}^{[t]}$ evolve in time.  In this subsection, we assume that the roots are positive and distinct at some fixed time $t$.  However, as we show below in Theorem \ref{thm:root-separation}, it turns out that if the roots are all nonnegative at $t = 0$ then they shatter instantaneously: they are positive and distinct for all $t > 0$.

\begin{theorem} \label{thm:roots-ode}
If $p(x,t)$ is a polynomial of degree $d$ with distinct positive roots $r_1(t)^2 > \dots > r_d(t)^2 > 0$, and which satisfies equation \eqref{eqn:pde}, then
\begin{equation}\label{eq:ode}
\dot{r}_k(t) = \sum_{j\colon j\ne k}{ \left(\frac{1}{r_k(t)-r_j(t)} + \frac{1}{r_k(t)+r_j(t)}\right) } + \frac{n+1}{r_k(t)},
\end{equation}
for all $k=1,\dots,d$.
\end{theorem}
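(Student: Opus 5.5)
Write $p(x,t)=\prod_{j=1}^d (x-\rho_j(t))$ with $\rho_j=r_j^2$. The plan is to differentiate the identity $p(\rho_k(t),t)=0$ in $t$, which gives
\[
\dot\rho_k(t) = -\frac{\partial_t p(\rho_k,t)}{\partial_x p(\rho_k,t)} .
\]
Since the roots are distinct, $\partial_x p(\rho_k,t)\neq 0$. The implicit function theorem then also guarantees that the roots are differentiable in $t$, so $r_k=\sqrt{\rho_k}$ is differentiable because $\rho_k>0$.

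Next I substitute the PDE (\ref{eqn:pde}) into the numerator. This gives
\[
\dot\rho_k = 2\,\frac{\rho_k\,\partial_x^2 p(\rho_k,t) + (n+1)\,\partial_x p(\rho_k,t)}{\partial_x p(\rho_k,t)}.
\]
To evaluate this I use the standard identity for a polynomial with simple roots:
\[
\frac{\partial_x^2 p(\rho_k)}{\partial_x p(\rho_k)} = \sum_{j\ne k}\frac{2}{\rho_k-\rho_j}.
\]
It follows from writing $p=(x-\rho_k)q_k$ with $q_k(x)=\prod_{j\neq k}(x-\rho_j)$. Then $p'(\rho_k)=q_k(\rho_k)$ and $p''(\rho_k)=2q_k'(\rho_k)$, and $q_k'/q_k$ is a sum of simple fractions. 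Combining these,
\[
\dot\rho_k = \sum_{j\neq k}\frac{4\rho_k}{\rho_k-\rho_j} + 2(n+1).
\]

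Finally I convert to $r_k$ using $\dot r_k=\dot\rho_k/(2r_k)$, which gives
\[
\dot r_k = \sum_{j\ne k}\frac{2r_k}{r_k^2-r_j^2} + \frac{n+1}{r_k}.
\]
The partial fraction decomposition $\frac{2r_k}{r_k^2-r_j^2}=\frac{1}{r_k-r_j}+\frac{1}{r_k+r_j}$ then yields (\ref{eq:ode}).

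I do not expect a real obstacle. The one point needing care is the justification for differentiating the roots: simple roots give smooth dependence on $t$, and this uses that $p(x,t)$ is polynomial, hence smooth, in $t$. The rest is the bookkeeping in the second-derivative identity and the factors of $2$ in the change of variables $\rho=r^2$.
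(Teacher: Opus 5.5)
Your proof is correct and is essentially the paper's argument: the paper divides the PDE by $p$, expands $\partial_t p/p$, $\partial_x p/p$ and $\partial_x^2 p/p$ in partial fractions, and takes the residue at $x=r_k^2$. Those residues are exactly your quantities $\partial_t p(\rho_k)/\partial_x p(\rho_k)=-\dot\rho_k$ and $\rho_k\,\partial_x^2 p(\rho_k)/\partial_x p(\rho_k)=\sum_{j\ne k}2\rho_k/(\rho_k-\rho_j)$, so your implicit-differentiation route is the same computation, and your implicit-function-theorem justification of the differentiability of the roots fills in a point the paper leaves tacit.
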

\begin{proof}
We can assume that $p(x,t)$ is monic, so that
\[
p(x,t) = \prod_{i=1}^d{ (x-r_i(t)^2) }.
\]
Then
\begin{align}
\frac{\partial_tp}{p} &= -2\sum_{i=1}^d{ \frac{r_i(t)\dot{r}_i(t)}{x-r_i(t)^2}}, \label{eq:partial1}\\
\frac{\partial_xp}{p} &= \sum_{i=1}^d{ \frac{1}{x-r_i(t)^2} }\label{eq:partial2},\\
\frac{\partial_x^2p}{p} &= 2\sum_{1\le i<j\le d}{ \frac{1}{(x-r_i(t)^2)(x-r_j(t)^2)} }.\label{eq:partial3}
\end{align}

The assumption~\eqref{eqn:pde} implies that $(x\partial_x^2+\frac{1}{2}\partial_t+(n+1)\partial_x)p=0$.
As a result, from \eqref{eq:partial1}, \eqref{eq:partial2} and \eqref{eq:partial3}, we deduce the identity
\[
\sum_{1\le i<j\le d}{ \frac{2x}{(x-r_i(t)^2)(x-r_j(t)^2)} } + \sum_{i=1}^d{ \frac{n+1-r_i(t)\dot{r}_i(t)}{x-r_i(t)^2} }=0.
\]
Let $1\le k\le d$ be arbitrary.
By inspecting the residue of this identity at $x=r_k(t)^2$, we get
\[
\sum_{j\colon j\ne k}{\frac{2r_k(t)}{r_k(t)^2-r_j(t)^2}}+(n+1-r_k(t)\dot{r}_k(t)) = 0.
\]
Moving $\dot{r}_k(t)$ to one side, we obtain exactly the equation \eqref{eq:ode}. This finishes the proof.
\end{proof}

The ODEs for the root evolution in fact describe a gradient flow for an entropy-like quantity.

\begin{proposition} \label{prop:ode-grad}
    The equations (\ref{eq:ode}) are equivalent to the vector equality
    \begin{equation} \label{eq:ode-grad}
    \dot{r} = \nabla H(r),
    \end{equation}
    where $\nabla$ indicates the gradient with respect to $(r_1, \dots, r_d)$, and
    \begin{equation} \label{eqn:rent-def}
    H(r) = \log \left[ \left( \prod_{i=1}^d r_i^{n+1} \right) \prod_{1 \le i<j \le d} (r_i - r_j)(r_i + r_j) \right]. 
    \end{equation}
\end{proposition}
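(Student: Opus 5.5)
This is a direct computation: expand $H$ as a sum of logarithms and differentiate with respect to each $r_k$. On the open region where $r_1 > \dots > r_d > 0$ (the setting of Theorem \ref{thm:roots-ode}), every factor inside the logarithm in (\ref{eqn:rent-def}) is positive. So $H$ is smooth there, and we can write
\[
H(r) = (n+1)\sum_{i=1}^d \log r_i + \sum_{1\le i<j\le d} \big( \log(r_i - r_j) + \log(r_i + r_j) \big).
\]

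Next, fix $k$ and compute $\partial H/\partial r_k$ term by term.

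The first sum contributes $(n+1)/r_k$.

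In the double sum, only pairs containing $k$ contribute. For pairs with $i = k < j$, the derivative is
\[
\frac{1}{r_k - r_j} + \frac{1}{r_k + r_j}.
\]
For pairs with $i < k = j$, the derivative of $\log(r_i - r_k)$ is $-1/(r_i - r_k) = 1/(r_k - r_i)$, and the derivative of $\log(r_i + r_k)$ is $1/(r_k + r_i)$. These have the same form as the first case with the index relabeled.

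Collecting both cases gives
\[
\partial_{r_k} H = \sum_{j\ne k}\Big(\frac{1}{r_k-r_j}+\frac{1}{r_k+r_j}\Big) + \frac{n+1}{r_k},
\]
which is exactly the right-hand side of (\ref{eq:ode}).

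Finally, equating $\dot r_k$ with $\partial_{r_k}H$ for each $k=1,\dots,d$ is the componentwise form of the vector equation (\ref{eq:ode-grad}). Hence the two systems are equivalent.

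The only point requiring care is the sign in the $i<k$ case. It is resolved by the identity $-1/(r_i-r_k)=1/(r_k-r_i)$. Note also that ordering the roots is not essential: using $\log|r_i - r_j|$ would give the same gradient off the diagonals.
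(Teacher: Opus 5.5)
Your computation is correct, and it is the same direct differentiation of $H$ that the paper's one-line proof (``direct calculation from (\ref{eq:ode})'') has in mind. The sign handling for the pairs with $i<k$ is right, and so is your side remark that replacing $\log(r_i-r_j)$ by $\log|r_i-r_j|$ gives the same gradient.
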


\begin{proof}
    Direct calculation from (\ref{eq:ode}).
\end{proof}

    From the gradient flow formulation (\ref{eq:ode-grad}), it is clear that in our setting the functional of the roots $H(r(t))$ plays the same role as the Boltzmann entropy in classical probability or the free entropy (\ref{eqn:free-ent}) in free probability. We will thus refer to $H$ as the \emph{rectangular entropy}.

\section{Properties of the dynamics}
\label{sec:properties}

Our analysis of the properties of the root dynamics begins by studying the time evolution of the rectangular entropy.

\subsection{Entropy and Fisher information}

Taking the time derivative of $H(r(t))$ and substituting (\ref{eq:ode-grad}), we find that the rectangular entropy evolves according to
\[
\partial_t H(r(t)) = \nabla H(r(t)) \cdot \dot{r} = \| \nabla H(r(t)) \|^2.
\]
The above expression is positive except at critical points of $H$.  From the explicit expression for $\nabla H(r)$ on the right-hand side of (\ref{eq:ode}), it is easy to see that $H$ has no critical points in the region $r_1 > \dots > r_d > 0$. Thus we have shown:

\begin{corollary} \label{cor:dtH}
    Suppose that $r_1(t) > \dots > r_d(t) > 0$ satisfy the ODE system (\ref{eq:ode}) at some time $t > 0$. Then the rectangular entropy $H(r(t))$ is strictly increasing at time $t$.
\end{corollary}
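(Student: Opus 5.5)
By the chain rule and Proposition \ref{prop:ode-grad}, we have
\[
\partial_t H(r(t)) = \nabla H(r(t))\cdot \dot r(t) = \|\nabla H(r(t))\|^2 \ge 0 .
\]
It therefore suffices to show that $\nabla H(r)\neq 0$ at every point of the open chamber $r_1>\dots>r_d>0$.

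For this I would look at a single coordinate and pick the one that makes every term in the formula have the same sign. Take $k=1$, the largest root. By \eqref{eq:ode}, the first component of $\nabla H$ is
\[
\partial_{r_1} H(r) = \sum_{j\ne 1}\left(\frac{1}{r_1-r_j}+\frac{1}{r_1+r_j}\right) + \frac{n+1}{r_1}.
\]
In the chamber, each summand is strictly positive:
\begin{itemize}
\item $r_1-r_j>0$ for $j\ge 2$, by the ordering;
\item $r_1+r_j>0$, since all roots are positive;
\item $(n+1)/r_1>0$, since $n\ge d\ge 1$ and $r_1>0$.
\end{itemize}
So $\partial_{r_1}H(r)>0$, which gives $\|\nabla H(r)\|^2 \ge (\partial_{r_1}H)^2>0$.

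The derivative $\partial_t H(r(t))$ is therefore strictly positive at time $t$, which is what the statement means by "strictly increasing at time $t$". If a genuinely increasing function on an interval is wanted, one also notes that $r(t)$ solves a smooth ODE on the open chamber, so the inequality holds on a neighborhood of $t$.

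The argument has no real obstacle. The only point needing care is to avoid summing all components, where signs could cancel, and to work with the extremal root $r_1$ instead, where every term is positive.
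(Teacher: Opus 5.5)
Your proof is correct and follows the paper's argument: $\partial_t H(r(t)) = \|\nabla H(r(t))\|^2$, combined with the fact that $H$ has no critical points in the chamber $r_1>\dots>r_d>0$. The paper only says this last fact is ``easy to see'' from \eqref{eq:ode}; you supply the detail by observing that every term of $\partial_{r_1}H$ is strictly positive.
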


The time derivative of the Boltzmann entropy of a probability measure evolving under the classical heat flow is given by the Fisher information. Accordingly, for the rectangular finite free heat flow, the analogue of the Fisher information is
    \begin{equation} \label{eqn:rect-fisher}
        I(r) = \| \nabla H(r) \|^2 = \sum_{k=1}^d \left( \sum_{j\colon j\ne k} \left( \frac{1}{r_k-r_j} + \frac{1}{r_k+r_j} \right) + \frac{n+1}{r_k} \right)^2,
    \end{equation}
the \emph{rectangular Fisher information}. We now show a simplified formula for $I(r)$.

\begin{proposition} \label{prop:rf-simple}
    For $r_1 > \dots > r_d > 0$, we have
    \begin{equation} \label{eqn:rf-simple}
        I(r) = \sum_{k=1}^d \frac{(n+1)^2}{r_k^2} + \sum_{1 \le i < j \le d} \left( \frac{2}{(r_i - r_j)^2} + \frac{2}{(r_i + r_j)^2} \right).
    \end{equation}
\end{proposition}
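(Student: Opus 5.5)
Expand the square in (\ref{eqn:rect-fisher}) and show that all the cross terms cancel, in the same way as the classical identity $\sum_k\big(\sum_{j\ne k}\frac{1}{x_k-x_j}\big)^2 = 2\sum_{i<j}\frac{1}{(x_i-x_j)^2}$ for type $A$. Write
\[
v_k = \sum_{j\ne k} \left( \frac{1}{r_k-r_j} + \frac{1}{r_k+r_j} \right) + \frac{n+1}{r_k} = \sum_{j\ne k} \frac{2r_k}{r_k^2-r_j^2} + \frac{n+1}{r_k}.
\]
Then $v_k^2$ splits into three kinds of terms. The first is the diagonal term $(n+1)^2/r_k^2$, which already appears in (\ref{eqn:rf-simple}). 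The second is the cross term $2\frac{n+1}{r_k}\sum_{j\ne k}\frac{2r_k}{r_k^2-r_j^2}=4(n+1)\sum_{j\ne k}\frac{1}{r_k^2-r_j^2}$. Summed over $k$, this is antisymmetric in $(j,k)$, so it vanishes. The third is the square $\big(\sum_{j\ne k}\frac{2r_k}{r_k^2-r_j^2}\big)^2$, and the rest of the argument deals with it.

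Split the third kind of term into diagonal terms $j=l$ and off-diagonal terms $j\ne l$. The diagonal terms give $\sum_k\sum_{j\ne k}\frac{4r_k^2}{(r_k^2-r_j^2)^2}$. Symmetrize over each unordered pair $\{i,j\}$ to get $\sum_{i<j}\frac{4(r_i^2+r_j^2)}{(r_i^2-r_j^2)^2}$. One checks directly that
\[
\frac{2}{(r_i-r_j)^2}+\frac{2}{(r_i+r_j)^2} = \frac{2\big((r_i+r_j)^2+(r_i-r_j)^2\big)}{(r_i^2-r_j^2)^2} = \frac{4(r_i^2+r_j^2)}{(r_i^2-r_j^2)^2},
\]
which is exactly the pair term in (\ref{eqn:rf-simple}).

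The main step is to show that the off-diagonal part vanishes:
\[
\sum_{k}\sum_{\substack{j\ne l\\ j,l\ne k}} \frac{4r_k^2}{(r_k^2-r_j^2)(r_k^2-r_l^2)} = 0.
\]
Substitute $y_i=r_i^2$. The sum becomes $4$ times a sum over ordered triples of distinct indices $(k,j,l)$ of $\frac{y_k}{(y_k-y_j)(y_k-y_l)}$. Group the triples by the unordered set $\{a,b,c\}$. Each set contributes twice the cyclic sum
\[
\frac{y_a}{(y_a-y_b)(y_a-y_c)}+\frac{y_b}{(y_b-y_a)(y_b-y_c)}+\frac{y_c}{(y_c-y_a)(y_c-y_b)}.
\]
This is the divided difference of the linear function $f(y)=y$ at three nodes, so it equals $0$. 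Equivalently, by Lagrange interpolation, $\sum_a \frac{y_a^m}{\prod_{b\ne a}(y_a-y_b)}=0$ for $m<2$.

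Putting the three pieces together gives (\ref{eqn:rf-simple}). The only nontrivial point is the three-term identity in the last paragraph. Everything else is bookkeeping of symmetric and antisymmetric sums, and the hypothesis $r_1>\dots>r_d>0$ ensures all denominators are nonzero.
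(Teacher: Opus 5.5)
Your proof is correct and uses the same strategy as the paper: expand the square in \eqref{eqn:rect-fisher}, keep the diagonal pieces, and let the cross terms cancel after summing over $k$.

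In one respect your argument is more complete than the paper's. The paper's per-$k$ expansion \eqref{squaring} leaves out the products with $j\neq l$, namely $\sum_{j\neq l,\; j,l\neq k}\frac{4r_k^2}{(r_k^2-r_j^2)(r_k^2-r_l^2)}$. These products do not vanish for a fixed $k$ once $d\ge 3$. Its coefficient $2(n+2)$ should also be $2(2n+3)$, but that slip is harmless because the whole term cancels by antisymmetry. The identity \eqref{eqn:rf-simple} is still true, because the omitted products sum to zero over $k$. That is exactly your three-term divided-difference identity, so your proof supplies the step the paper's argument skips.

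The remaining difference is only bookkeeping. You symmetrize $\frac{4r_k^2}{(r_k^2-r_j^2)^2}$ over each unordered pair. The paper instead splits off $\frac{2}{r_k^2-r_j^2}$ and cancels it by antisymmetry.
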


\begin{proof}
    For any $1\le k\le d$, we have
    \begin{multline}\label{squaring}
    \left( \sum_{j\colon j\ne k} \left( \frac{1}{r_k-r_j} + \frac{1}{r_k+r_j} \right) + \frac{n+1}{r_k} \right)^2
    = \frac{(n+1)^2}{r_k^2}\\
    +\sum_{j\colon j\ne k}\left( \frac{1}{(r_k-r_j)^2} + \frac{1}{(r_k+r_j)^2} \right) + 2(n+2)\sum_{j\colon j\ne k}{ \frac{1}{r_k^2-r_j^2} }.
    \end{multline}
    Note that
    \begin{equation}\label{zero-sum}
    \sum_{k=1}^d\ \sum_{j\colon j\ne k}{ \frac{1}{r_k^2-r_j^2}} = 0.
    \end{equation}
    From \eqref{eqn:rect-fisher} and \eqref{zero-sum}, we see that summing \eqref{squaring} over $k$ ranging from $1$ to $d$ results in
    \[
    I(r) = \sum_{k=1}^d \left( \frac{(n+1)^2}{r_k^2} + \sum_{j\colon j\ne k}\left( \frac{1}{(r_k-r_j)^2} + \frac{1}{(r_k+r_j)^2} \right) \right).
    \]
    The right-hand side above is readily seen to be equal to the right-hand side of \eqref{eqn:rf-simple}, concluding the proof.
\end{proof}

Although the rectangular entropy increases in time, the next result shows that it decelerates --- that is, the rectangular Fisher information is decreasing.

\begin{proposition}
    For $r_1(t) > \dots > r_d(t)>0$ satisfying (\ref{eq:ode}), we have
    \begin{equation*}
    \partial_t I(r(t)) = -2 \left[ \sum_{k=1}^d \frac{n+1}{r_k^2} \dot{r}_k^2 + \sum_{1\le i < j\le d} \left( \frac{(\dot{r}_i - \dot{r}_j)^2}{(r_i - r_j)^2} + \frac{(\dot{r}_i + \dot{r}_j)^2}{(r_i + r_j)^2} \right) \right] \le 0.
\end{equation*}
\end{proposition}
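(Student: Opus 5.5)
Since $I(r) = \|\nabla H(r)\|^2$ and $\dot r = \nabla H(r)$ by Proposition \ref{prop:ode-grad}, the chain rule gives
\[
\partial_t I(r(t)) = 2\, \nabla H(r)^{T}\, \nabla^2 H(r)\, \nabla H(r) = 2\, \dot r^{T}\, \nabla^2 H(r)\, \dot r .
\]
So the whole statement reduces to computing the Hessian quadratic form of $H$ and evaluating it at the vector $v = \dot r$.

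The first step is to decompose $H$ into elementary logarithmic terms:
\[
H(r) = (n+1)\sum_k \log r_k + \sum_{i<j}\big(\log(r_i - r_j) + \log(r_i + r_j)\big).
\]
Each term is a function of a single linear form in $r$. For a term $\log \ell(r)$ with $\ell$ linear, the Hessian quadratic form evaluated at $v$ is
\[
-\frac{(\ell(v))^2}{\ell(r)^2}.
\]
Applying this term by term:
\begin{itemize}
\item the term $(n+1)\log r_k$ contributes $-(n+1)v_k^2/r_k^2$;
\item the term $\log(r_i - r_j)$ contributes $-(v_i - v_j)^2/(r_i - r_j)^2$;
\item the term $\log(r_i + r_j)$ contributes $-(v_i + v_j)^2/(r_i + r_j)^2$.
\end{itemize}
Summing these and setting $v = \dot r$, then multiplying by $2$, gives exactly the bracketed expression with the overall factor $-2$.

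Nonpositivity is then immediate. Every summand inside the bracket is a square divided by a positive quantity, using $n+1 > 0$ and $r_i \pm r_j \neq 0$ on the region $r_1 > \dots > r_d > 0$.

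The only potential obstacle is making sure that the chain-rule expression is legitimate. This requires $r(t)$ to be $C^1$ and to stay in the open chamber $r_1 > \dots > r_d > 0$, which is the standing hypothesis. On that chamber $H$ is smooth, so both the gradient identity and the Hessian computation are valid.

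An alternative route would be to differentiate the simplified formula of Proposition \ref{prop:rf-simple} directly and substitute (\ref{eq:ode}). That approach leads to cubic cross terms whose cancellation needs identities like \eqref{zero-sum}, so I would avoid it in favor of the Hessian argument above.
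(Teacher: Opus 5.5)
Your proof is correct and follows the paper's route: use the chain rule and $\dot r = \nabla H$ to get $\partial_t I = 2\,\dot r\cdot (D^2H)\,\dot r$, then evaluate this Hessian form. The only difference is bookkeeping. You read off the sum of squares directly from the rank-one Hessians $-aa^{T}/(a\cdot r)^2$ of the terms $\log(a\cdot r)$. The paper instead computes the Hessian entries and regroups the double sums by hand; your version is slightly cleaner and makes the sign obvious at once.
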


\begin{proof}
By the chain rule and the gradient flow equation (\ref{eq:ode-grad}), we obtain
\[
\partial_t I(r(t)) = \sum_{k=1}^d \frac{\partial I}{\partial r_k} \dot{r}_k = \sum_{k=1}^d \frac{\partial I}{\partial r_k} \frac{\partial H}{\partial r_k}.
\]
Since $I = \|\nabla H\|^2$, we have 
\[
\frac{\partial I}{\partial r_k} = 2\sum_{j=1}^d \frac{\partial^2 H}{\partial r_k \partial r_j} \frac{\partial H}{\partial r_j},
\]
giving
\[
\partial_t I(r(t)) = 2\sum_{1\le k,j\le d} \frac{\partial^2 H}{\partial r_k \partial r_j} \frac{\partial H}{\partial r_k} \frac{\partial H}{\partial r_j} = 2 \dot{r} \cdot (D^2H) \dot{r},
\]
where $D^2H$ is the Hessian of $H$. From (\ref{eqn:rent-def}), we compute the entries of the Hessian:
\begin{align*}
\frac{\partial^2 H}{\partial r_k^2} &= -\frac{n+1}{r_k^2} - \sum_{j\colon j\ne k}\left[\frac{1}{(r_k-r_j)^2} + \frac{1}{(r_k+r_j)^2}\right], \\
\frac{\partial^2 H}{\partial r_k \partial r_j} &= \frac{1}{(r_k-r_j)^2} - \frac{1}{(r_k+r_j)^2}, \quad k \neq j.
\end{align*}
Expanding the dot product, we find:
\begin{align*}
\partial_t I(r(t)) &= 2 \dot{r} \cdot (D^2H) \dot{r} = 2\sum_{k=1}^d \frac{\partial^2 H}{\partial r_k^2} \dot{r}_k^2 + 2\sum_{k \neq j} \frac{\partial^2 H}{\partial r_k \partial r_j} \dot{r}_k \dot{r}_j \\
&= -2\sum_{k=1}^d \frac{n+1}{r_k^2} \dot{r}_k^2 - 2\sum_{k \neq j}\left[\frac{1}{(r_k-r_j)^2} + \frac{1}{(r_k+r_j)^2}\right] \dot{r}_k^2 \\
&\quad + 2\sum_{k \neq j} \left[\frac{1}{(r_k-r_j)^2} - \frac{1}{(r_k+r_j)^2}\right] \dot{r}_k \dot{r}_j.
\end{align*}
For the double sums, note that
\begin{align*}
\sum_{k \neq j} \frac{\dot{r}_k^2}{(r_k-r_j)^2} - \sum_{k \neq j} \frac{\dot{r}_k \dot{r}_j}{(r_k-r_j)^2} &= \sum_{1\le i < j\le d} \frac{(\dot{r}_i - \dot{r}_j)^2}{(r_i-r_j)^2},\\
\sum_{k \neq j} \frac{\dot{r}_k^2}{(r_k+r_j)^2} + \sum_{k \neq j} \frac{\dot{r}_k \dot{r}_j}{(r_k+r_j)^2} &= \sum_{1\le i < j\le d} \frac{(\dot{r}_i + \dot{r}_j)^2}{(r_i+r_j)^2}.
\end{align*}
Combining the results of the preceding two displays yields the desired formula.
\end{proof}

\subsection{Instantaneous separation and positivity of the roots}

\begin{theorem} \label{thm:root-separation}
    If $p(x)$ is a monic polynomial of degree $d$ with nonnegative real roots, then the roots of $p(x,t) = (p \boxplus^n_d G_{n,d}^{[t]})(x)$ are positive and distinct for all $t > 0$.
\end{theorem}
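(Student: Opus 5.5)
The plan is to reduce the statement to the known preservation of nonnegative real-rootedness, and then upgrade to positivity and simplicity by a multiplicity argument that uses the PDE of Theorem \ref{thm:pde}.

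\textbf{Step 1 (real-rootedness).} By \cite[Theorem 1.5]{gribinski2022rectangular}, $\boxplus^n_d$ preserves nonnegative real-rootedness. For $t>0$ the polynomial $G_{n,d}^{[t]}(x)=(-2t)^d d!\,L_d^{(n)}(x/2t)$ has $d$ distinct positive roots, since Laguerre polynomials with $\alpha=n>-1$ are orthogonal polynomials on $(0,\infty)$. Hence $p(x,t)$ has only nonnegative real roots for every $t\ge 0$.

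\textbf{Step 2 (positivity).} I would show $p(0,t)\neq 0$ for $t>0$. Write $p(x)=x^m q(x)$ with $q(0)\ne0$ and all roots of $q$ positive. By Corollary \ref{lem:exponential_derivative}, $p(x,t)=\exp(-2tD)p$ with $D=x^{-n}\partial_x x^{n+1}\partial_x$, and $D x^j=j(j+n)x^{j-1}$. Setting $x=0$ gives
\[
p(0,t)=\sum_{j}(-2t)^j\,\frac{j!\,(j+n)!}{j!\,n!}\,\mathrm{coef}_{x^j}(p)=\sum_j (2t)^j\frac{(j+n)!}{n!}\,(-1)^j\mathrm{coef}_{x^j}(p).
\]
For $p$ with nonnegative roots, the coefficients satisfy $(-1)^{d-j}\mathrm{coef}_{x^j}(p)\ge0$, and the top coefficient is $1$. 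So every term of $(-1)^d p(0,t)$ is $\ge 0$, and the $j=d$ term $(2t)^d (d+n)!/n!$ is strictly positive. Hence $p(0,t)\ne0$, and all roots are positive.

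\textbf{Step 3 (distinctness).} This is the main obstacle. My first attempt is a multiplicity argument directly from the PDE, combined with the semigroup property (\ref{eqn:Gsemigp}) and associativity of $\boxplus^n_d$. Suppose that at some $t_0>0$ there is a root $c>0$ of multiplicity $m\ge2$. Then, near $(c,t_0)$, Taylor-expand $p(x,t)=a(x-c)^m+\dots$. The PDE gives
\[
\partial_t p(c,t_0)=-2\bigl(c\,\partial_x^2+(n+1)\partial_x\bigr)p(c,t_0),
\]
which is zero if $m\ge3$. So I would instead look at the local structure. Using the PDE, the leading behaviour of $p$ near $(c,t_0)$ is $a\bigl[(x-c)^m-2cm(m-1)(t-t_0)(x-c)^{m-2}+\dots\bigr]$, which is a time-scaled Hermite polynomial in $x-c$, with variance proportional to $4c(t_0-t)$. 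For $t<t_0$ such a polynomial has non-real roots near $c$, contradicting Step 1 applied at times slightly before $t_0$ (which is legitimate since $t_0>0$). The step needing care is justifying that the roots near $c$ at time $t<t_0$ are approximated by those of this Hermite-type polynomial. I would handle it by rescaling $x-c=\sqrt{t_0-t}\,y$: then $(t_0-t)^{-m/2}p$ converges to $a\,\mathrm{He}$-type polynomial in $y$ with $m$ real simple roots of one sign pattern, but with the wrong sign of the variance, so all its nonzero roots are imaginary. Hurwitz's theorem then produces non-real roots of $p(\cdot,t)$, which is a contradiction.

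Alternatively, as a fallback, I would rely on the known stronger result that $\boxplus^n_d$ with a polynomial having simple positive roots yields simple roots, if it is available in \cite{gribinski2022rectangular}.
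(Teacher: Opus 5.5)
Your proof is correct, and it takes a genuinely different route from the paper's.

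\textbf{The paper's route.} It argues forward in time:
\begin{enumerate}
\item it proves positivity only for small $t$, from $c_0(t)\not\equiv 0$;
\item it proves separation for small $t$ by analyzing how a multiple root of $p$ splits at $t=0$, followed by an induction on multiplicities;
\item it propagates ``positive and distinct'' to all later times, using that the rectangular entropy $H$ increases along the root gradient flow while $H\to-\infty$ at a collision or at the origin.
\end{enumerate}

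\textbf{Your route.} You need no propagation step. Your sign computation gives $(-1)^d p(0,t)\ge (2t)^d (n+d)!/n!>0$ for every $t>0$. Your Step 3 works at an arbitrary $t_0>0$ and backward in time: a root of multiplicity $m\ge 2$ at $c>0$ would produce roots of $p(\cdot,t_0-s)$ near $c+2i\sqrt{cs}\,\eta$, with $\eta$ a nonzero root of $\mathrm{He}_m$. This contradicts preserved real-rootedness.

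\textbf{What each buys.} Your approach bypasses the root ODE, the entropy, and the multiplicity induction, and it treats all multiplicities at once. The cost is that the Gribinski--Marcus real-rootedness theorem becomes the source of the contradiction. The paper also assumes real roots, but its contradiction comes from $H$. In exchange, the paper's route exhibits the entropy barrier as the mechanism that prevents collisions, which fits its gradient-flow theme.

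\textbf{Completing Step 3.} The step you flag is routine once you record two facts.
\begin{enumerate}
\item Since $p$ is polynomial in $t$, iterating the PDE gives the exact expansion $p(x,t_0-s)=\sum_k\frac{(2s)^k}{k!}D^k p(x,t_0)$, with $D=x\partial_x^2+(n+1)\partial_x$. This is valid for $s>0$.
\item Writing $D=c\,\partial_x^2+(x-c)\partial_x^2+(n+1)\partial_x$ shows that $D$ lowers the order of vanishing at $c$ by at most $2$, and only $c\,\partial_x^2$ contributes to the leading term.
\end{enumerate}
Together these give $s^{-m/2}p(c+\sqrt{s}\,y,\,t_0-s)\to a\sum_{k\le m/2}\frac{(2c)^k}{k!}m^{\downarrow 2k}y^{m-2k}$ locally uniformly in $y$, where $a$ is your leading Taylor coefficient. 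This limit equals $a(2i\sqrt{c})^m\mathrm{He}_m\bigl(y/(2i\sqrt{c})\bigr)$, so its nonzero roots are simple and purely imaginary. Hurwitz's theorem then applies on a small disk around one of them that avoids the real axis.

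Your remark about the semigroup property and associativity, and the fallback, are not needed.
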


\begin{proof}
    We first show that if the roots of $p(x,t)$ are positive and distinct at some time $\tau > 0$, then they remain positive and distinct at all times $t \ge \tau$.  The assumption at time $\tau$ implies that $H(r(\tau)) > - \infty$. If
    \[
    \tau' = \inf \{ \ t \ge \tau \ : \ p(x,t) \text{ has a multiple root or a zero root at time $t$} \ \} < \infty,
    \]
    then $H(r(\tau')) = - \infty$, implying that $\partial_t H(r(t)) < 0$ at some time $t \in (\tau, \tau')$, which contradicts the result of Corollary \ref{cor:dtH} that $H(r(t))$ is strictly increasing at all times when the roots are positive and distinct.  Therefore, the roots remain positive and distinct at all times $t \ge \tau$.

    Thus, it suffices to show that if the roots of $p(x,t)$ are nonnegative at $t = 0$, then they are strictly positive and distinct for all sufficiently small $t > 0$.
    
    First, let us prove the strict positivity of the roots.
    More explicitly, we will prove that all the roots of $p(x,t)$, as a polynomial in $x$, are strictly positive whenever $t>0$ is small.
    Indeed, let us assume the opposite.
    From \eqref{eqn:pxt-coeffs}, we can write $p(x,t)=\sum_{i=0}^d{c_i(t)x^i}$, where each $c_i(t)$ is a polynomial in $t$.
    Since the roots of $p(x,t)$ are nonnegative, our assumption implies that $0$ is a root of $p(x,t)$ for all small $t>0$.
    It follows that $c_0(t)=0$ for infinitely many positive values of $t$ approaching zero, so $c_0(t)$ must be the zero polynomial.
    However, from \eqref{eqn:pxt-coeffs} we have $n!\,c_0(t)=\sum_{j=0}^d (n+d-j)!\,a_{2j}(-2t)^{d-j}$, where $a_0=1$; in particular, $c_0(t)$ is not identically zero.
    This contradiction completes the proof of the claim.
        
    Next, we verify the simple-rootedness of $p(x,t)$, as a polynomial in $x$, for all small $t>0$.
    If $p(x,0)=p(x)$ has all simple roots, then so does $p(x,t)$ for small $t>0$, because the roots are continuous functions of the coefficients of the polynomial, and these coefficients are continuous (polynomial) functions of $t$.
    So assume that not all roots of $p(x)$ are simple.
    The rest of the proof will invoke the PDE (\ref{eqn:pde}).

    Let us first consider the simplest case, when $p(x)$ has a double root at some $x_0 > 0$ and its other roots are simple.
    Since $p(x_0,0) = \partial_x p(x_0,0) = 0$, we have
    \[
    \partial_t p(x_0,0) = -2 x_0 \partial^2_x p(x_0,0),
    \]
    where $\partial^2_x p(x_0,0) \ne 0$ because $x_0$ is a double root.  If $\partial^2_x p(x_0,0) > 0$, so that $p(x)$ is convex at $x_0$, then $p(x_0,t)$ is decreasing and thus negative for small $t > 0$.  On the other hand, if $\partial^2_x p(x_0,0) < 0$, so that $p(x)$ is concave at $x_0$, then $p(x_0,t)$ is increasing and thus positive for small $t > 0$, so that in both cases the roots at $x_0$ separate.

    For higher-order roots, the argument is similar.
    Suppose that $p(x)$ has a multiple root of multiplicity $2j$ at $x_0 > 0$, so that $\partial_x^\ell p(x_0) = 0$ for $\ell = 0,\hdots, 2j-1$, but $\partial_x^{2j}p(x_0)\ne 0$.
    This means in particular that $\partial_x^{2j-2}p$ has a double root at $x_0$.
    Observe that
    \[
    \partial_t\big( \partial_x^{2j-2}p \big) = \partial_x^{2j-2} \big( \partial_tp \big) = \partial_x^{2j-2} \big( -2x\partial_x^2p - 2(n+1)\partial_xp \big),
    \]
    and therefore
    \[
    \partial_t\big( \partial_x^{2j-2}p \big)(x_0,0) = -2x_0\partial_x^{2j}p(x_0,0).
    \]
    With this equality, we can argue as before that regardless of the sign of $\partial_x^{2j}p(x_0, 0)$, the double root of $\partial_x^{2j-2}p(x,t)$ at $x=x_0$ and $t=0$ separates for small $t>0$.
    Equivalently, the root of multiplicity $2j$ of $p(x)$ must split into at least two roots (each of multiplicity strictly less than $2j$) for small times $t>0$.
    The argument for multiple roots of $p(x)$ with odd multiplicities is almost identical, so we omit it.

    The fact that \emph{all} roots must be distinct for $t > 0$ then follows by strong induction on the multiplicity of the roots: a root of multiplicity $k$ cannot split into lower-order roots in such a way that any root at time $t > 0$ has multiplicity greater than 1, as this would require a multiple root of lower order than $k$ to remain stable up to time $t$, contradicting the inductive hypothesis.
\end{proof}

\subsection{Linear drift of the center of mass}

Next, we identify a conserved quantity for the motion, which leads to a formula for the evolution of the center of mass of the roots.

\begin{proposition} \label{prop:conserved}
If $r_1(t) > \cdots > r_d(t) > 0$ satisfy the ODE system (\ref{eq:ode}), then
\begin{equation} \label{eq:virial}
\sum_{k=1}^d{r_k(t)\dot{r}_k(t)} = d(n+d).
\end{equation}
Consequently,
\begin{equation} \label{eq:moment-growth}
\sum_{k=1}^d{r_k(t)^2} = \sum_{k=1}^d{r_k(0)^2} + 2d(n+d)t
\end{equation}
for all $t\ge 0$, implying that the first moment of the root distribution of $p(x,t)$ grows linearly in time.
\end{proposition}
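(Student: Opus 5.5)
Multiply the $k$-th equation of (\ref{eq:ode}) by $r_k$ and sum over $k$. The potential term contributes $\sum_k r_k\cdot\frac{n+1}{r_k}=d(n+1)$. For the interaction term, combine the two fractions:
\[
r_k\left(\frac{1}{r_k-r_j}+\frac{1}{r_k+r_j}\right)=\frac{2r_k^2}{r_k^2-r_j^2}.
\]
Symmetrizing over the unordered pair $\{j,k\}$, the contributions $\frac{2r_k^2}{r_k^2-r_j^2}+\frac{2r_j^2}{r_j^2-r_k^2}=2$. Hence the double sum equals $2\binom{d}{2}=d(d-1)$. Adding gives $d(n+1)+d(d-1)=d(n+d)$, which is (\ref{eq:virial}). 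This is the only real computation, and it rests on the symmetrization trick. The same mechanism appears in (\ref{zero-sum}) in the proof of Proposition~\ref{prop:rf-simple}, and that is the step I would present with care.

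For (\ref{eq:moment-growth}), note that $\frac{d}{dt}\sum_k r_k^2=2\sum_k r_k\dot r_k=2d(n+d)$, a constant. Integration then gives the formula on any interval where the roots are positive and distinct. The subtlety is that at $t=0$ the roots may collide or vanish, so the ODE need not hold there. To avoid this, I would bypass the ODE and argue from coefficients. The quantity $\sum_k r_k(t)^2$ is minus the $x^{d-1}$ coefficient of $p(x,t)$, and by (\ref{eqn:pxt-coeffs}) with $k=1$ this coefficient is $a_2-2d(n+d)t$. This holds for all $t\ge0$ and any initial polynomial, and it is consistent with the ODE computation. Alternatively, one can use continuity of the roots in $t$ together with Theorem~\ref{thm:root-separation}, integrating on $[\varepsilon,t]$ and letting $\varepsilon\to0$.

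Finally, the first moment of the root distribution $\frac1d\sum_i\delta_{r_i^2}$ is $\frac1d\sum r_k^2$. It therefore equals its initial value plus $2(n+d)t$, which grows linearly in time.
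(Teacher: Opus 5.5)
Your proof is correct and follows the paper's argument. You multiply (\ref{eq:ode}) by $r_k$, sum over $k$, and evaluate $\sum_{k\ne j} r_k^2/(r_k^2-r_j^2)=d(d-1)/2$ by pairing terms, which the paper does by swapping indices, and then integrate. Your extra derivation from the $x^{d-1}$ coefficient $a_2-2d(n+d)t$ in (\ref{eqn:pxt-coeffs}) is a good way to get the formula all the way down to $t=0$ when the initial roots are only nonnegative, a point the paper's one-line integration skips.
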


\begin{proof}
Multiplying both sides of (\ref{eq:ode}) by $r_k$ and summing over $k$ gives, after simplification,
\begin{equation} \label{eq:virial-expand}
\sum_{k=1}^d r_k \dot{r}_k = 2\cdot\sum_{\substack{1\le k,j\le d\\ k\ne j}} \frac{r_k^2}{r_k^2-r_j^2} + (n+1)d.
\end{equation}
Swapping indices $k$ and $j$ and performing simple algebraic manipulations, we find
\begin{align*}
\sum_{\substack{1\le k,j\le d\\ k\ne j}} \frac{r_k^2}{r_k^2-r_j^2}
&= \sum_{\substack{1\le k,j\le d\\ k\ne j}} \frac{r_j^2}{r_j^2-r_k^2}
= \sum_{\substack{1\le k,j\le d\\ k\ne j}} \left( 1 - \frac{r_k^2}{r_k^2-r_j^2} \right)\\
&= d(d-1) - \sum_{\substack{1\le k,j\le d\\ k\ne j}} \frac{r_k^2}{r_k^2-r_j^2},
\end{align*}
which implies that
\[
\sum_{k \neq j} \frac{r_k^2}{r_k^2-r_j^2} = \frac{d(d-1)}{2}.
\]
Substituting into (\ref{eq:virial-expand}) then gives
\[
\sum_{k=1}^d r_k \dot{r}_k = 2\cdot\frac{d(d-1)}{2} + (n+1)d = d(n+d).
\]
Since $\frac{d}{dt}\sum_i r_i^2 = 2\sum_i r_i\dot{r}_i$, integrating (\ref{eq:virial}) yields (\ref{eq:moment-growth}).
\end{proof}

\begin{remark}
In the (non-rectangular) finite free heat flow (\ref{eqn:ffhf}), if $p(x,t) = \prod_i (x - a_i(t))$ solves the time-reversed heat equation $\partial_t p = -\frac{1}{2}\partial_x^2 p$, then $\sum_i \dot{a}_i = 0$, so the center of mass of the roots is conserved. In contrast, for the rectangular finite free heat flow, the first moment $\frac{1}{d}\sum_i{r_i^2}$, namely the average of the roots of $p(x,t)$, grows linearly with rate $2d(n+d)$.
\end{remark}

\subsection{Evolution of the Cauchy transform}

We now establish a generalized Burgers equation for the Cauchy transform of the root distribution.  The following proposition is analogous to the relationship between the classical heat equation and the Burgers equation via the Cole--Hopf transformation \cite{cole1951, hopf1950}, as well as to the fact the Cauchy transform of the free heat flow (\ref{eqn:freehf}) satisfies the complex Burgers equation (\ref{eqn:cx-burgers}).

Recall that the \emph{Cauchy transform} of a probability measure $\mu$ on $\R$ is the function
\[
G_\mu(x) = \int_{\R} \frac{1}{x-y} \,\mu(\mathrm{d}y), \qquad x \in \C \setminus \mathrm{supp}(\mu).
\]
The Cauchy transform is sometimes also referred to as the Stieltjes transform and defined with the opposite sign.  For the empirical distribution of the roots $r_1(t)^2, \hdots, r_d(t)^2$ of the polynomial $p(x,t)$, the Cauchy transform is
\[
G(x,t)
\;=\; \frac{1}{d} \sum_{i=1}^d \frac{1}{x - r_i(t)^2}.
\]
In this case, the Cauchy transform is just a constant multiple of the logarithmic derivative of $p(x,t)$:
\begin{equation} \label{eqn:G-logd}
G(x,t) = \frac{1}{d} \frac{\partial_x p(x,t)}{p(x,t)}.
\end{equation}
The above observation justifies the analogy with the Cole--Hopf transformation, which describes solutions of the classical viscous Burgers equation as logarithmic derivatives of solutions of the heat equation.

\begin{proposition} \label{prop:cauchy_pde}
Let $p(x,t) = \prod_{i=1}^d (x - r_i(t)^2)$ be a polynomial of degree $d$ with nonnegative real roots that satisfies the partial differential equation
\begin{equation} \label{eq:poly_pde}
{\partial_t} p(x,t) = -2\left( x {\partial_x^2} + (n+1) {\partial_x} \right) p(x,t).
\end{equation}
Then the Cauchy transform of the empirical root distribution \[
G(x,t) = \frac{1}{d}\sum_{i=1}^d \frac{1}{x - r_i(t)^2}
\]
satisfies the partial differential equation
\begin{equation} \label{eq:cauchy_pde}
\partial_t G + 2 \partial_x \left( d x G^2 + (n+1) G + x \partial_x G \right) = 0.
\end{equation}
\end{proposition}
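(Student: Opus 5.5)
The plan is a Cole--Hopf-type computation based on the identity (\ref{eqn:G-logd}). I will write $G = \frac{1}{d}\partial_x \log p$ and derive the PDE for $\log p$ from (\ref{eq:poly_pde}). Then I differentiate once in $x$. I will work on the region $x \in \C$ away from the roots, where $\log p$ is locally well defined (or simply work with $p_x/p$ directly), and for $t$ such that $p(x,t) \neq 0$.

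\textbf{Step 1.} Set $u = \partial_x p / p = dG$. Then
\[
\frac{\partial_x^2 p}{p} = \partial_x u + u^2 .
\]
Dividing (\ref{eq:poly_pde}) by $p$ gives
\[
\frac{\partial_t p}{p} = -2\left( x(\partial_x u + u^2) + (n+1) u \right).
\]

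\textbf{Step 2.} Note that $\partial_t u = \partial_x\big(\partial_t p / p\big)$, since $\partial_t \partial_x \log p = \partial_x \partial_t \log p$. This is valid because $p$ is smooth in $(x,t)$ and nonvanishing at the point considered. Hence
\[
\partial_t u + 2\partial_x\left( x u^2 + (n+1)u + x\partial_x u \right) = 0 .
\]

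\textbf{Step 3.} Substitute $u = dG$ and divide by $d$. The quadratic term $x u^2 = d^2 x G^2$ becomes $d\,xG^2$, while the linear terms simply lose their factor of $d$. This yields exactly (\ref{eq:cauchy_pde}).

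\textbf{Main obstacle.} The only subtlety is justifying the interchange $\partial_t(\partial_x p/p) = \partial_x(\partial_t p/p)$. This holds as an identity of rational functions in $x$ with coefficients polynomial in $t$: both sides equal $(p\,\partial_x\partial_t p - \partial_x p\,\partial_t p)/p^2$. So the computation holds for all $x$ off the roots, with no use of root distinctness. I therefore do not need Theorem \ref{thm:roots-ode}, although one could alternatively verify the identity via residues using (\ref{eq:ode}).
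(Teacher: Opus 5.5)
Your proposal is correct and follows essentially the same route as the paper: both write $\partial_t G = \frac{1}{d}\partial_x(\partial_t p/p)$, use $\partial_x^2 p/p = d\,\partial_x G + d^2 G^2$ to rewrite the PDE divided by $p$, and then substitute. Your remark that the interchange of derivatives is an identity of rational functions, needing no root distinctness, is a harmless addition.
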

\begin{proof}
Differentiating (\ref{eqn:G-logd}) with respect to $t$, we have:
\begin{equation} \label{eq:dtG}
\partial_t G = \frac{1}{d}\partial_t \left( \frac{\partial_x p}{p} \right) = \frac{1}{d}\partial_x \left( \frac{\partial_t p}{p} \right).
\end{equation}
The equation (\ref{eq:poly_pde}) for $\partial_t p$ gives
\begin{equation}\label{partial_t}
\frac{\partial_t p}{p} = \frac{-2x \partial_x^2 p - 2(n+1) \partial_x p}{p} = -2x \left( \frac{\partial_x^2 p}{p} \right) - 2(n+1) \frac{\partial_x p}{p},
\end{equation}
while the identity for the second logarithmic derivative,
\[ \partial_x \left(\frac{\partial_x p}{p}\right) = \frac{\partial_x^2 p}{p} - \left(\frac{\partial_x p}{p}\right)^2,
\]
gives
\[ \frac{\partial_x^2 p}{p} = d\,\partial_x G + d^2 G^2. \]
Substituting this and $\partial_x p/p = dG$ back into \eqref{partial_t}, we have:
\[ \frac{\partial_t p}{p} = -2x(d\,\partial_x G + d^2 G^2) - 2d(n+1)G. \]
Plugging the above into (\ref{eq:dtG}) yields
\[ \partial_t G = \frac{1}{d} \partial_x \left[ - 2dx \partial_x G - 2d^2 x G^2 - 2d(n+1)G \right], \]
and rearranging terms gives the desired PDE (\ref{eq:cauchy_pde}).
\end{proof}

\begin{remark}
    We have referred to the PDE (\ref{eq:cauchy_pde}) as a generalized Burgers equation due to the analogy with the Cole--Hopf transformation: it is a nonlinear equation that describes the logarithmic derivative of a solution to the linear equation (\ref{eq:poly_pde}). However, the comparison should be taken lightly, as (\ref{eq:cauchy_pde}) is qualitatively very different from the classical Burgers equation
    \begin{equation} \label{eqn:viscous-burgers}
    \partial_t u + u\partial_x u = \nu \partial_x^2 u.
    \end{equation}
    Distributing the outer spatial derivative, the PDE (\ref{eq:cauchy_pde}) can be rewritten as
    \begin{equation}
        \partial_tG + 2dG^2 + 4dx G\partial_x G + 2(n+2) \partial_x G + 2 x \partial_x^2 G = 0.
    \end{equation}
    The above equation features a quadratic term $2dG^2$ and a linear drift term $2(n+2) \partial_x G$ that have no analogues in (\ref{eqn:viscous-burgers}); moreover, the nonlinear advection term $4dx G \partial_x G$ and the second-order term $2x \partial_x^2 G$ have variable coefficients that vanish as $x \to 0$, making the equation degenerate at the origin.
\end{remark}

\section{Asymptotics}
\label{sec:asymptotics}

In this section, we study the asymptotic behavior of the rectangular finite free heat flow in two different limit regimes: the long-time limit $t \to \infty$ and the high-degree limit $d \to \infty$.  In the first setting, we prove an analogue of the fact that the Gaussian distribution is the unique fixed point of the classical heat flow after rescaling space by $\sqrt{t}$.  In the second setting, we show that the high-degree limit of the rectangular finite free heat flow is described by the rectangular free convolution with a Marchenko--Pastur distribution.

\subsection{Long-time asymptotics}
\label{sec:long-time}

Here we consider the asymptotic distribution of the roots of the rectangular finite free heat flow at large times $t$.
As $t\to\infty$, we prove that the rescaled roots $r_k(t)^2/t$ approach the corresponding roots of the polynomial $G^{[1]}_{n,d}(x)$, which are twice the roots of the Laguerre polynomial $L_d^{(n)}(x)$.  This result does not depend on the initial data and is analogous to the fact that under the classical heat flow, and after rescaling space by $\sqrt{t}$, any probability measure with finite variance will converge to a standard Gaussian as $t \to \infty$.  The proof follows from a fixed-point argument for a modified version of the entropy functional $H$.

\begin{theorem} \label{thm:long-time}
    If $p(x)$ is a monic polynomial of degree $d$ with nonnegative real roots, then the roots $r_1(t)^2 \ge \hdots \ge r_d(t)^2 \ge 0$ of $p(x,t) = (p \boxplus^n_d G_{n,d}^{[t]})(x)$ satisfy
    \begin{equation}
    \lim_{t \to \infty} \frac{r_k(t)^2}{t} \;=\; 2 \xi_k, \qquad k = 1, \hdots, d,
    \end{equation}
    where $\xi_1 > \hdots > \xi_d > 0$ are the roots of the Laguerre polynomial $L_d^{(n)}(x)$.
\end{theorem}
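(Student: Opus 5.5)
The most direct route is through the explicit coefficient formula (\ref{eqn:pxt-coeffs}), combined with continuity of roots as functions of the coefficients; the entropy $H$ is not needed. Consider the rescaled polynomial
\[
q_t(x) \;=\; t^{-d}\, p(tx,t), \qquad t>0 .
\]
Its roots are exactly $r_k(t)^2/t$, $k=1,\dots,d$, and it is monic of degree $d$. Substituting $x \mapsto tx$ in (\ref{eqn:pxt-coeffs}) and dividing by $t^d$ gives
\[
q_t(x) = x^d + \sum_{k=1}^d \sum_{j=0}^k \frac{(-2)^j}{j!} \frac{(d-k+j)!}{(d-k)!} \frac{(n+d-k+j)!}{(n+d-k)!}\, a_{2(k-j)}\, t^{j-k}\, x^{d-k}.
\]

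First I will pass to the limit coefficientwise. Since $j\le k$, every term with $j<k$ carries a factor $t^{j-k}\to 0$ as $t\to\infty$. Only the term $j=k$ survives, where $a_0=1$. Hence the coefficient of $x^{d-k}$ in $q_t$ converges to
\[
\frac{(-2)^k}{k!}\,\frac{d!}{(d-k)!}\,\frac{(n+d)!}{(n+d-k)!} \;=\; \frac{(-2)^k d^{\downarrow k}(d+n)^{\downarrow k}}{k!},
\]
which by (\ref{eqn:Gcoeffs}) is exactly the coefficient of $x^{d-k}$ in $G^{[1]}_{n,d}(x)$. Thus $q_t \to G^{[1]}_{n,d}$ coefficientwise, and in fact at rate $O(1/t)$. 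This is the analogue of the statement that the initial data is washed out by the rescaling.

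Next I will identify the roots of the limit. By (\ref{eqn:Gdef}), $G^{[1]}_{n,d}(x) = (-2)^d d!\, L^{(n)}_d(x/2)$, so its roots are $2\xi_1>\dots>2\xi_d>0$. These are distinct and positive by the standard fact that the classical Laguerre polynomials with $\alpha=n>-1$ have $d$ simple zeros in $(0,\infty)$.

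Finally, since the roots of a monic polynomial of fixed degree depend continuously on its coefficients (as an unordered multiset, and therefore also after ordering when the roots are real), the ordered roots $r_1(t)^2/t\ge\dots\ge r_d(t)^2/t$ of $q_t$ converge to the ordered roots $2\xi_1>\dots>2\xi_d$. Real-rootedness for all $t$ is supplied by Theorem \ref{thm:root-separation}, or by the preservation of nonnegative real-rootedness under $\boxplus^n_d$. This gives the claim.

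The only point needing care is this last continuity step, specifically matching the ordered labels $k$. Because the limiting roots are simple, it follows from Hurwitz's theorem or Rouché's theorem applied on small disjoint disks around each $2\xi_k$. For large $t$, each disk contains exactly one root of $q_t$, which must be real, and the ordering is therefore preserved.
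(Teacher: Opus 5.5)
Your proof is correct, and it takes a genuinely different route from the paper's. The paper works entirely with the root dynamics. It rescales to $s_k=r_k/\sqrt{t}$ and rewrites (\ref{eq:odes-rescaled}) as the gradient flow $ds/d\tau=\nabla\tilde H(s)$, where $\tau=\log t$ and $\tilde H=H-\frac14\sum_i s_i^2$. It identifies the unique critical point through the Laguerre equation (\ref{eq:lag-ode}). Convergence then follows from strict concavity of $\tilde H$, compactness of its superlevel sets in $\Omega$, and LaSalle's invariance principle; Theorem \ref{thm:root-separation} is needed to place the trajectory in $\Omega$.

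You bypass the ODE and use the explicit solution (\ref{eqn:pxt-coeffs}). In effect, your computation observes that dilation commutes with $\boxplus^n_d$. So $q_t$ is the convolution of the dilated initial polynomial, which tends to the identity element $x^d$, with $G^{[1]}_{n,d}$.

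Your route has three advantages:
\begin{itemize}
\item It is shorter and fully elementary.
\item It applies to an arbitrary monic $p$, complex roots allowed. Formula (\ref{eqn:pxt-coeffs}) holds in general, and your Rouch\'e/conjugation step then forces the roots to be real, positive and simple for large $t$.
\item It is quantitative. The coefficients of $q_t$ are polynomials in $1/t$ and the roots of $G^{[1]}_{n,d}$ are simple, so the implicit function theorem makes each $r_k(t)^2/t$ analytic in $1/t$ near $0$. This gives $r_k(t)^2/t-2\xi_k=O(1/t)$, sharper than the $O(t^{\lambda_{\max}})$ bound with only $\lambda_{\max}<-1/2$ in the remark after the theorem.
\end{itemize}

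The paper's route buys structure. It characterizes the limit as the unique maximizer of a modified entropy, the analogue of the Gaussian as the fixed point of the rescaled heat flow. Its Lyapunov argument uses only the gradient form and concavity of the dynamics, not linearity of the polynomial evolution, so it is more robust to changes in the interaction.
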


\begin{proof}
    First of all, note that by Theorem \ref{thm:root-separation}, we have $r_1(t) > \hdots > r_d(t) > 0$ for all $t > 0$.
    It will be convenient to define the rescaled positions
    \[
    s_k(t) = \frac{r_k(t)}{\sqrt{t}}, \qquad k = 1, \hdots, d,
    \]
    which are also positive and ordered: $s_1(t)>\dots>s_d(t)>0$.  Observe that this is indeed the correct scaling, since by the moment formula (\ref{eq:moment-growth}), we have
    \begin{equation} \label{eq:moment-lim}
    \sum_{i=1}^d s_i(t)^2 = \frac{1}{t} \sum_{i=1}^d r_i(0)^2 + 2d(n+d) \ \to \ 2d(n+d) \qquad \text{ as } t \to \infty,
    \end{equation}
    implying that $s(t) = (s_1(t), \dots, s_d(t)) \in \R^d$ remains in a bounded set for all $t>0$ without collapsing to zero.
    Next, recall that by Theorem \ref{thm:roots-ode} and Proposition \ref{prop:ode-grad}, the unscaled $r_k$ evolve according to the gradient flow $\dot r = \nabla H(r)$. The rescaled positions then satisfy the ODEs
    \begin{align} \begin{split} \label{eq:odes-rescaled}
    \dot s_k = \frac{\dot r_k}{\sqrt{t}} - \frac{r_k}{2t^{3/2}} &= \frac{1}{\sqrt{t}} \left[ \sum_{j\colon j \ne k} \left( \frac{1}{r_k - r_j} + \frac{1}{r_k + r_j} \right) + \frac{n+1}{r_k} \right] - \frac{s_k}{2t} \\
    &= \frac{1}{t} \left[ \sum_{j\colon j \ne k} \left( \frac{1}{s_k - s_j} + \frac{1}{s_k + s_j} \right) + \frac{n+1}{s_k} \right] - \frac{s_k}{2t}.
    \end{split} \end{align}
    At a fixed point of the rescaled system, where $\dot s_k = 0$ for all $k=1,\dots, d$, we therefore need
    \[
    \sum_{j : j \ne k} \left( \frac{1}{s_k - s_j} + \frac{1}{s_k + s_j} \right) + \frac{n+1}{s_k} = \frac{s_k}{2}, \qquad k = 1, \dots, d.
    \]
    Using the identity 
    \[
    \frac{1}{s_k - s_j} + \frac{1}{s_k + s_j} = \frac{2s_k}{s_k^2 - s_j^2}
    \]
    and writing $u_k = s_k^2$, we can divide through by $s_k > 0$ to rewrite the fixed-point equations as
    \begin{equation} \label{eq:fp-u}
    \sum_{j : j \ne k} \frac{2}{u_k - u_j} + \frac{n+1}{u_k} = \frac{1}{2}, \qquad k = 1, \hdots, d.
    \end{equation}
    Note that these new variables are also positive and distinct, that is, $u_1 > \hdots > u_d > 0$. Now let $Q(x) = \prod_{j=1}^d (x - u_j)$ be the monic polynomial with roots $u_1,\dots,u_d$. Since
    \[
    \frac{Q''(u_k)}{Q'(u_k)} = \sum_{j : j \ne k} \frac{2}{u_k - u_j},
    \]
    the fixed-point equation (\ref{eq:fp-u}) can be written as
    \[
    u_k Q''(u_k) + (n+1) Q'(u_k) = \frac{u_k}{2} Q'(u_k), \qquad k = 1, \hdots, d.
    \]
    This says that the polynomial $xQ'' + (n+1 - x/2)Q'$ vanishes at each root of $Q$ and is therefore divisible by $Q$. But $xQ'' + (n+1-x/2)Q'$ and $Q$ are both polynomials of the same degree $d$, so in fact
    \begin{equation} \label{eq:Q-ode-lambda}
    x Q''(x) + \left(n+1 - \frac{x}{2}\right) Q'(x) + \lambda Q(x) = 0
    \end{equation}
    for some constant $\lambda$. 
    Since $Q$ is monic of degree $d$, the terms $xQ''$ and $(n+1)Q'$ contribute only up to 
    degree $d-1$, while $(-x/2)Q'(x)$ and $\lambda Q(x)$ each contribute a term of degree $d$, with leading coefficients $-d/2$ and $\lambda$ respectively. For (\ref{eq:Q-ode-lambda}) to hold, the coefficient of $x^d$ must vanish, giving $\lambda = d/2$, so $Q$ satisfies
    \[
    x Q''(x) + \left(n+1 - \frac{x}{2}\right) Q'(x) + \frac{d}{2} Q(x) = 0.
    \]
    Substituting $Q(x) = \tilde{Q}(x/2)$ and using $Q'(x) = \frac{1}{2}\tilde{Q}'(x/2)$ 
    and $Q''(x) = \frac{1}{4}\tilde{Q}''(x/2)$, then replacing $x/2$ by $x$ throughout, 
    we find that $\tilde{Q}$ satisfies the Laguerre ODE
    \begin{equation} \label{eq:lag-ode}
    x \tilde{Q}''(x) + (n+1-x)\tilde{Q}'(x) + d\cdot\tilde{Q}(x) = 0.
    \end{equation}
    The unique monic degree-$d$ polynomial solution to this ODE is $(-1)^d d! \cdot 
    L_d^{(n)}(x)$, so $Q(x) = (-1)^d d! \cdot L_d^{(n)}(x/2)$, and the roots of $Q(x)$ 
    are $u_k = 2\xi_k$, where $\xi_1 > \dots > \xi_d > 0$ are the roots of $L_d^{(n)}(x)$.

    So far we have shown that the rescaled ODE system (\ref{eq:odes-rescaled}) has a unique fixed point $s^*=(s_1^*,\dots,s_d^*)$ with $s_1^*>\dots>s_d^*>0$, given by $s_k^* = \sqrt{2\xi_k}$ for all $k=1,\dots,d$. It remains to show that the vector of rescaled positions $s(t)=(s_1(t),\dots,s_d(t))$ actually converges to this fixed point.
    
    The ODE system for $s(t)$ can be written as
    \[
    \dot{s} = \frac{1}{t} \nabla \tilde{H}(s),
    \]
    where $\tilde{H}(s)$ is the modified entropy:
    \[
    \tilde H(s) = H(s) - \frac{1}{4} \sum_{i=1}^d s_i^2 = \log \left[ \left( \prod_{i=1}^d s_i^{n+1} \right) \prod_{1 \le i<j \le d} (s_i^2 - s_j^2) \right] - \frac{1}{4} \sum_{i=1}^d s_i^2.
    \]
    Introducing the time change $\tau = \log t$ transforms this into a genuine gradient flow,
    \[
    \frac{ds}{d\tau} = \nabla \tilde{H}(s).
    \]
    
    We now verify the hypotheses needed to conclude convergence. First, as pointed out before, the vector $s(\tau)$ remains in the region
    \[
    \Omega = \{(s_1,\dots,s_d) \ | \ s_1 > \hdots > s_d > 0\}
    \]
    for all $\tau\in\mathbb{R}$. Second, starting from any finite $\tau_0 \in \R$, the trajectory $\{ s(\tau) \ | \ \tau \ge \tau_0 \}$ is in fact confined to a compact subset of $\Omega$. To see this, note that $\tilde H(s) \to -\infty$ both as $s$ approaches $\partial\Omega$ and as $|s| \to \infty$, so every superlevel set $\{s \in \Omega \ | \ \tilde H(s) \ge C\}$ is a compact subset of $\Omega$. Since $\tilde H(s(\tau))$ is nondecreasing along the flow, the trajectory remains in the compact superlevel set $\{\tilde H \ge \tilde H(s(\tau_0))\}$ for all $\tau \ge \tau_0$. Third, $\tilde{H}$ is strictly concave on $\Omega$: indeed, writing
    \[
    \tilde{H}(s) = \sum_{i=1}^d (n+1)\log s_i + \sum_{1 \le i < j \le d} \log(s_i - s_j) 
    + \sum_{1 \le i < j \le d} \log(s_i + s_j) - \frac{1}{4}\sum_{i=1}^d s_i^2,
    \]
    we observe that the first three terms are concave functions of linear forms in $s$, hence they have negative semidefinite Hessians; their sum is therefore also negative semidefinite. The fourth term $-\frac{1}{4}\sum_i s_i^2$ contributes a further $-\frac{1}{2}I$ to the Hessian, so $D^2\tilde{H}$ is negative definite on $\Omega$.
    As a result, the critical point $s^* \in \Omega$ identified above is unique and is a global maximum.

    Since $\tilde{H}(s(\tau))$ is nondecreasing along the flow and the trajectory is eventually confined to a compact subset of $\Omega$, LaSalle's invariance principle \cite{lasalle1961stability} implies that $s(\tau)$ converges as $\tau\to\infty$ to some point in the set of critical points of $\tilde{H}$ in $\Omega$. Since $s^*$ is the only such point, we have that $s(\tau) \to s^*$, or equivalently, $r_k(t)^2/t \to 2\xi_k$ as $t\to\infty$.
\end{proof}

\begin{remark}
    The rate of convergence in Theorem \ref{thm:long-time} can be made explicit via a local analysis of the gradient flow $ds/d\tau = \nabla\tilde{H}(s)$ near the fixed 
    point $s^*$. Since $D^2\tilde{H}(s^*)$ is negative definite, $s^*$ is a nondegenerate critical point of $\tilde{H}$, and small perturbations $\delta s = s - s^*$ decay exponentially in $\tau$ at a rate governed by the largest eigenvalue $\lambda_{\max} 
    < 0$ of $D^2\tilde{H}(s^*)$. Substituting $\tau = \log t$, this gives
    \begin{equation}
    \left|\frac{r_k(t)^2}{t} - 2\xi_k\right| = O(t^{\lambda_{\max}})
    \end{equation}
    for large times $t$. In principle, $\lambda_{\max}$ can be computed in terms of the Laguerre zeros $\xi_1, \hdots, \xi_d$, though it is not obvious whether it admits an explicit formula. Regardless, there is a straightforward upper bound: since $D^2\tilde{H}(s^*) = D^2H(s^*) - \frac{1}{2}I$ and $D^2H(s^*)$ is negative semidefinite, we have $\lambda_{\max} < -1/2$, so the convergence is at least as fast as $O(t^{-1/2})$.
\end{remark}

\begin{remark}
The appearance of the Laguerre polynomial $L_d^{(n)}$ in Theorem \ref{thm:long-time} has a probabilistic interpretation in terms of the squared Bessel process discussed in Remark \ref{rem:bessel}. The Laguerre differential equation (\ref{eq:lag-ode}) says that $L_d^{(n)}$ is an eigenfunction of the Laguerre operator 
\[
\mathcal{L}_{\mathrm{Lag}}^{(n)} = x\partial_x^2 + (n+1-x)\partial_x
\]
with eigenvalue $-d$.  The Laguerre operator differs from the squared-Bessel generator $\mathcal{L} = 2x\partial_x^2 + 2(n+1)\partial_x$ (up to a factor of $2$) by the addition of a linear restoring drift $-x\partial_x$, which corresponds to a quadratic confining potential at the origin.  The rescaling $s_k(t) = r_k(t)/\sqrt{t}$ in the proof of Theorem \ref{thm:long-time} introduces precisely this drift, and due to the resulting confinement, the rescaled squared Bessel process has a stationary measure given by a Gamma distribution. Theorem \ref{thm:long-time} says that after the same rescaling, the rectangular finite free heat flow converges to $G^{[1]}_{n,d}(x) \propto L_d^{(n)}(x/2)$; as $d$ ranges over the natural numbers, these are precisely the orthogonal polynomials for the stationary measure.
\end{remark}

\begin{remark}
The limiting positions $2\xi_k$ identified in Theorem \ref{thm:long-time} are consistent with results of Andraus--Katori--Miyashita \cite{AKM2} and Andraus--Miyashita \cite{Andraus-Miyashita}, who proved analogous convergence theorems for a family of stochastic interacting particle systems in the so-called \emph{freezing limit}; see Section \ref{sec:dunklprocs} for further discussion of the connection between these stochastic processes and the rectangular finite free heat flow. While these results are closely related to Theorem \ref{thm:long-time}, to the best of our knowledge the deterministic statement and convergence argument above are new.
\end{remark}

\subsection{High-degree asymptotics}
\label{sec:high-degree}

As the degree $d$ grows, the rectangular finite free heat flow converges to its infinite-dimensional analogue: the rectangular free convolution with a Marchenko--Pastur distribution. In this subsection, we make this statement precise. The result, Theorem \ref{thm:high-degree} below, is a rectangular analogue of the fact that the finite free heat flow (\ref{eqn:ffhf}) converges to the free convolution with the semicircle distribution as $d\to\infty$ \cite[Theorem 2.13]{kabluchko2025lee}.  An essentially equivalent statement to our Theorem~\ref{thm:high-degree} was shown by a different argument via a cumulant method in \cite[Main Result III]{Cuenca2024}, but for the sake of clarity and completeness we give a self-contained proof here.

We briefly recall the necessary background. For $\lambda \in [0,1]$, Benaych-Georges \cite{benaych2009rectangular} defined the
\emph{rectangular free convolution} $\mu \boxplus_\lambda \nu$ of symmetric compactly supported probability
measures on $\mathbb{R}$, arising as the limit of symmetrized empirical singular value distributions
of sums of independent rectangular random matrices.  The map $x \mapsto x^2$ gives a bijective correspondence between symmetric measures on $\mathbb{R}$ and measures on $\mathbb{R}_{\ge 0}$; via this correspondence, the convolution $\boxplus_\lambda$
induces an analogous convolution $\boxplus_\lambda^+$ on compactly supported probability measures on $\mathbb{R}_{\ge 0}$, which describes the limiting empirical squared singular value distributions.  More precisely, if $A_d$ and $B_d$ are independent
$d \times n$ random matrices, which are unitarily bi-invariant and
whose empirical squared singular value distributions converge weakly to $\mu$ and $\nu$
respectively as $d \to \infty$ with $d/n \to \lambda$, then the empirical squared singular value distribution of $A_d + B_d $ converges weakly to $\mu \boxplus^+_\lambda \nu$ almost surely.

To state our convergence result for the rectangular finite free heat flow, recall that in the definition (\ref{eqn:rectffconv}) of the convolution $\boxplus_d^n$, the roots of the polynomials correspond to squared singular values of matrices. We associate to
each monic degree-$d$ polynomial $p$ with nonnegative real roots
$\rho_1, \ldots, \rho_d \geq 0$, its \emph{scaled} empirical root distribution:
\[
    \mu_p = \frac{1}{d}\sum_{k=1}^d \delta_{\rho_k/d}.
\]
We also recall from \cite{KornyikMichaletzky} that for $c > 0$, the
\emph{Marchenko--Pastur distribution}\footnote{The Marchenko--Pastur distribution is sometimes defined slightly differently in the literature; see Remark~\ref{rem:MP}.} $\mathrm{MP}_c$ is the probability measure on $\mathbb{R}_{\ge 0}$ with density
\[
    \frac{\sqrt{(x_+ - x)(x - x_-)}}{2\pi x}\,\mathbf{1}_{[x_-,\, x_+]}(x),
    \qquad x_\pm = \bigl(\sqrt{c+1} \pm 1\bigr)^2,
\]
and that $\frac{1}{d}\sum_{k=1}^d \delta_{\xi_k/d} \to \mathrm{MP}_{c}$ weakly as
$d \to \infty$ with $n/d \to c$, where $\xi_1,\ldots,\xi_d$ are the roots of
$L_d^{(n)}$. We write $\mathrm{MP}_{c,t}$ for the pushforward of $\mathrm{MP}_c$ under
$x \mapsto tx$.

\begin{theorem}\label{thm:high-degree}
Let $(p_d)_{d \geq 1}$ be a sequence of monic polynomials with nonnegative real roots, where each $p_d$ has degree $d$, and suppose that the scaled empirical root distributions $\mu_{p_d}$ converge weakly to a compactly supported probability measure $\mu$ on $\mathbb{R}_{\ge 0}$ as $d\to\infty$.
Let $n = n(d)$ be a sequence of positive integers with $d/n \to\lambda\in (0,1]$. Then for each $t \geq 0$,
\[
    \mu_{p_d \,\boxplus^n_d\, G_{n,d}^{[t]}}
    \xrightarrow{\, \ \, } \mu \boxplus_\lambda^+ \mathrm{MP}_{1/\lambda,\, 2t}
\]
weakly as $d\to\infty$.
\end{theorem}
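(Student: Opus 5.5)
The plan is to reduce the statement to a known limit theorem for rectangular finite free convolution, combined with the Laguerre asymptotics recalled just before the statement. The key external input is the result of Gribinski--Marcus \cite{gribinski2022rectangular} and Cuenca \cite{Cuenca2024}: if $p_d$ and $q_d$ are monic of degree $d$ with nonnegative roots, $\mu_{p_d}\to\mu$ and $\mu_{q_d}\to\nu$ weakly with compactly supported limits, and $d/n\to\lambda$, then $\mu_{p_d\boxplus^n_d q_d}\to\mu\boxplus^+_\lambda\nu$. Since the paper promises a self-contained argument, I would reprove this in the form needed via a moment/cumulant method. The starting point is the coefficient formula (\ref{eqn:bc-ffc-coeffs}). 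Its weights factor as products of ratios $\frac{(d-i)!}{d!}\frac{(n+d-i)!}{(n+d)!}$, so after normalizing the coefficients appropriately the convolution becomes multiplicative on a suitable generating series. Taking logarithms then gives additive ``rectangular finite free cumulants.'' I would show that, under the scaling $\rho\mapsto\rho/d$, these cumulants converge to Benaych-Georges' rectangular free cumulants $\kappa_\ell(\mu)$, whose additivity characterizes $\boxplus^+_\lambda$.

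With that in hand, the steps are as follows.
\begin{enumerate}
\item Identify the limit of the Laguerre heat polynomial. By (\ref{eqn:Gdef}), the roots of $G^{[t]}_{n,d}$ are $2t\xi_k$. Hence $\mu_{G^{[t]}_{n,d}}$ is the pushforward of $\frac1d\sum_k\delta_{\xi_k/d}$ under $x\mapsto 2tx$. By the recalled fact with $c=n/d\to1/\lambda$, it converges to $\mathrm{MP}_{1/\lambda,2t}$, which is compactly supported.
\item Apply the limit theorem with $q_d=G^{[t]}_{n,d}$.
\item Treat $t=0$ separately: it is trivial by (\ref{eq:dirac}), since $\mathrm{MP}_{1/\lambda,0}=\delta_0$ is the identity for $\boxplus^+_\lambda$.
\end{enumerate}

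As a cross-check, and possibly a shortcut for the cumulant computation, I would note that the semigroup property (\ref{eqn:Gsemigp}) together with (\ref{eqn:Gcoeffs}) shows that the finite cumulants of $G^{[t]}_{n,d}$ are linear in $t$. In fact only the first one is nonzero, since $\log$ of the normalized series of $G^{[t]}$ is a single exponential by Corollary~\ref{lem:exponential_derivative}. This matches the fact that $\mathrm{MP}$ is the rectangular analogue of the Gaussian, whose rectangular free cumulants vanish beyond the first.

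I expect the main obstacle to be the passage from convergence of finite cumulants to weak convergence of measures. Convergence of cumulants yields convergence of moments of $\mu_{p_d\boxplus G}$. To conclude weak convergence, I need uniform compact support, or at least moment determinacy. The limit is compactly supported, so determinacy holds. For tightness, I would bound the largest root using the interlacing/real-rootedness preservation from \cite{gribinski2022rectangular}: the largest root of $p\boxplus^n_d q$ is at most something like $(\sqrt{\rho_{\max}(p)}+\sqrt{\rho_{\max}(q)})^2$. This gives supports of $\mu_{p_d\boxplus G}$ that are uniformly bounded once the supports of $\mu_{p_d}$ are. If I do not want to assume uniform boundedness of the supports of $\mu_{p_d}$, I would first truncate the outlier roots and use a Lipschitz-type continuity of $\boxplus^n_d$ in the roots. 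That technical point is where I would spend the most care.
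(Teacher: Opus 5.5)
Your reduction is the paper's proof. The two steps are the same: first $\mu_{G^{[t]}_{n,d}}\to\mathrm{MP}_{1/\lambda,2t}$, obtained from the roots $2t\xi_k$ and \cite{KornyikMichaletzky}; then a limit theorem for $\boxplus^n_d$ applied with $q_d=G^{[t]}_{n,d}$.

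The paper does not reprove that limit theorem. It cites it as a black box from Gribinski \cite[Theorem 6.1]{gribinski2024theory}. That is the reference to use: \cite{gribinski2022rectangular} gives the definition and root preservation, not the limit.

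Your cumulant reproof is therefore extra, and it is essentially the route of \cite{Cuenca2024} that the paper mentions. It buys you a way around \cite{KornyikMichaletzky}, since the normalized series of $G^{[t]}_{n,d}$ is $\sum_{k\le d}(-2tz)^k/k!$. The price is proving the finite-to-free cumulant limit and carrying out a truncation.

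For the truncation, a Lipschitz bound in root positions cannot absorb a single outlier root of size $d^3$. Use rank stability instead. If $p$ and $\hat p$ differ in one root, all their convex combinations are monic with nonnegative roots. By linearity and root preservation, all convex combinations of $p\boxplus^n_d q$ and $\hat p\boxplus^n_d q$ are then real-rooted. Hence these two polynomials share a common interlacer, and their root distributions are within $1/d$ in Kolmogorov distance. Replacing the $o(d)$ outlier roots therefore costs $o(1)$.

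One point needs correcting, and it affects the paper's invocation of the black box just as much as yours: the aspect ratio. Formula \eqref{eqn:bc-ffc-coeffs} is the $d\times(n+d)$ model. For instance, the weight $\frac{(d-1)(n+d-1)}{d(n+d)}$ of $a_2b_2$ in the coefficient of $x^{d-2}$ is exactly what $d\times m$ matrices give with $m=n+d$. So the limiting ratio is $\lim d/(n+d)=\lambda/(1+\lambda)$, not $\lambda$.

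Here is a concrete check. Let $m_k$ denote $k$th moments, set $c=1/\lambda$ and $\nu=\mathrm{MP}_{c,2t}$, so that $m_1(\nu)=2t(c+1)$.
\begin{itemize}
\item From \eqref{eqn:pxt-coeffs}, for uniformly bounded supports, $m_2\bigl(\mu_{p_d\boxplus^n_d G^{[t]}_{n,d}}\bigr)\to m_2(\mu)+m_2(\nu)+4(c+2)t\,m_1(\mu)$.
\item For $\boxplus^+_\kappa$ (ratio $d/m\to\kappa$), the cross term is $2(1+\kappa)m_1(\mu)m_1(\nu)=4(1+\kappa)(c+1)t\,m_1(\mu)$.
\item These agree if and only if $\kappa=1/(c+1)=\lambda/(1+\lambda)$.
\end{itemize}

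Your own cross-check detects the same thing. $\mathrm{MP}_{1/\lambda,2t}$ is the limiting squared-singular-value law of Gaussian $d\times(n+d)$ matrices, and its higher rectangular cumulants vanish only for the parameter $\lambda/(1+\lambda)$.

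So state your key input with $d/(n+d)\to\kappa$. Applied with the correct ratio, the argument (yours or the paper's) yields $\mu\boxplus^+_{\lambda/(1+\lambda)}\mathrm{MP}_{1/\lambda,2t}$. With $\boxplus^+_\lambda$ as written, the second moments disagree whenever $t>0$ and $m_1(\mu)>0$.
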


\begin{proof}
The argument reduces to two ingredients: determining the asymptotic root distribution of $G_{n,d}^{[t]}$, and determining the high-degree behavior of the rectangular convolution $\boxplus^n_d$.

For the asymptotics of $G_{n,d}^{[t]}$, we claim that $\mu_{G_{n,d}^{[t]}} \to \mathrm{MP}_{1/\lambda,\,2t}$ weakly.
By definition \eqref{eqn:Gdef}, we have $G_{n,d}^{[t]}(x) = (-2t)^d \cdot d! \cdot L_d^{(n)}(x/2t)$, so the roots of $G_{n,d}^{[t]}(x)$ are $2t\xi_1, \ldots, 2t\xi_d$, where $\xi_1, \ldots, \xi_d$ are the roots of $L_d^{(n)}(x)$.
As a result, $\mu_{G_{n,d}^{[t]}}$ is the pushforward of $\frac{1}{d}\sum_{k=1}^d \delta_{\xi_k/d}$ under the map $x\mapsto 2tx$.
Then \cite[Theorem~1]{KornyikMichaletzky} shows that, in the regime where $d\to\infty$, $n/d \to 1/\lambda$, the scaled empirical
measure $\frac{1}{d}\sum_{k=1}^d \delta_{\xi_k/d}$ converges weakly to
$\mathrm{MP}_{1/\lambda}$, and hence $\mu_{G_{n,d}^{[t]}} \to \mathrm{MP}_{1/\lambda,\,2t}$ in the same limit regime.

Then, by a result of Gribinski
\cite[Theorem 6.1]{gribinski2024theory},\footnote{The result \cite[Theorem 6.1]{gribinski2024theory} actually gives a stronger mode of convergence and is stated in terms of the symmetrized root distributions and the rectangular free convolution $\boxplus_\lambda$. Pushing the measures forward by the map $x \mapsto x^2$ results in the analogous statement for the squared root distributions and $\boxplus^+_\lambda$, which is what we use here.} the rectangular finite free convolution $\boxplus^n_d$
converges to $\boxplus_\lambda^+$ in the regime where $d\to\infty$, $d/n\to \lambda$: if $\mu_{q_d} \to \mu$ and $\mu_{s_d} \to \nu$ weakly, for sequences of polynomials $q_d, s_d$ of degree $d$ with nonnegative roots, then $\mu_{q_d \boxplus^n_d s_d} \to \mu \boxplus_\lambda^+ \nu$ weakly. Applying this for $q_d = p_d$ and $s_d = G_{n,d}^{[t]}$, together with the above limit $\mu_{G_{n,d}^{[t]}} \to \mathrm{MP}_{1/\lambda,\,2t}$, completes the proof.
\end{proof}

\begin{remark}
The measure $\mu \boxplus_\lambda^+ \mathrm{MP}_{1/\lambda,\,2t}$ can be regarded as the distribution at time $t$ of a \emph{rectangular free heat flow}: the infinite-dimensional evolution obtained by replacing the finite convolution $\boxplus^n_d$ and the heat polynomials $G^{[t]}_{n,d}$ with their large-$d$ limits $\boxplus_\lambda^+$ and $\mathrm{MP}_{1/\lambda,\,2t}$. Theorem~\ref{thm:high-degree}
thus establishes that the rectangular finite free heat flow converges, in the
high-degree limit, to this evolution on probability measures.
\end{remark}

\begin{remark}\label{rem:MP}
Following \cite{KornyikMichaletzky}, we defined the Marchenko--Pastur distribution $\mathrm{MP}_c$ in Theorem~\ref{thm:high-degree} in such a way that the roots $\xi_k$ of $L_d^{(n)}$
satisfy $\frac{1}{d}\sum_k \delta_{\xi_k/d} \to\mathrm{MP}_{1/\nu}$ as $d\to\infty$ and $d/n\to\nu\in(0,1]$.
This differs from the standard convention used in random matrix theory, in which the Marchenko--Pastur distribution with parameter $1/\nu$ has an atom of mass $1-\nu$ at the origin.  Up to normalization, the measure that we use in our text is the absolutely continuous part of the standard Marchenko--Pastur distribution.  See \cite[Remark 2(3)]{KornyikMichaletzky} for further details on both conventions and a precise statement of the relation between them.
\end{remark}

\section{Further perspectives} \label{sec:further}
\subsection{Calogero--Moser systems}

The ODE system (\ref{eq:ode}) describing the evolution of the roots is closely related to a certain \emph{Calogero--Moser system}. The Calogero--Moser systems are a family of completely integrable Hamiltonian systems that appear in many contexts in mathematical physics; see \cite{OlshanetskyPerelomov1981} for a review of their construction and solution methods.  In particular, (\ref{eq:ode}) can be regarded as a first-order analogue of a Calogero--Moser system of type $BC$, in the following sense.

\begin{theorem} \label{thm:1o-CM}
    Suppose that $r(t) = \big(r_1(t) > \hdots > r_d(t) > 0\big)$ satisfies
    \begin{equation}\label{eq:1o-CM}
\dot{r}_k(t) = \gamma_1 \sum_{j\colon j\ne k}{ \left(\frac{1}{r_k(t)-r_j(t)} + \frac{1}{r_k(t)+r_j(t)}\right) } + \frac{\gamma_2}{r_k(t)}
\end{equation}
for all $t \ge 0$ and some $\gamma_1, \gamma_2 \ge 0$.  Then $r(t)$ also satisfies the \emph{Calogero--Moser equations of type $BC$} with an attractive potential,
\begin{equation} \label{eq:CM-BC}
    \ddot{r}_k(t) = - 2\gamma_1^2 \sum_{j\colon j\ne k}{ \left(\frac{1}{\big(r_k(t)-r_j(t) \big)^3} + \frac{1}{\big( r_k(t)+r_j(t) \big)^3}\right) } - \frac{\gamma_2^2}{r_k(t)^3}.
\end{equation}
Conversely, any solution to (\ref{eq:CM-BC}) that satisfies (\ref{eq:1o-CM}) at $t = 0$ satisfies (\ref{eq:1o-CM}) for all $t \ge 0$.
\end{theorem}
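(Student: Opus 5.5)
The plan is to write (\ref{eq:1o-CM}) as a gradient flow $\dot r = \nabla W(r)$, with
\[
W(r) = \gamma_1\sum_{i<j}\big(\log(r_i-r_j)+\log(r_i+r_j)\big) + \gamma_2\sum_i \log r_i ,
\]
exactly as in Proposition \ref{prop:ode-grad}, and then to differentiate the first-order equation in time. Differentiating $\dot r = \nabla W(r)$ gives
\[
\ddot r = D^2W(r)\,\dot r = D^2W(r)\,\nabla W(r) = \tfrac12 \nabla\big(\|\nabla W\|^2\big).
\]
So the first claim reduces to identifying $\|\nabla W\|^2$ explicitly.

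This identification is the computation already done in Proposition \ref{prop:rf-simple}, now carried out with general weights. Squaring the $k$-th component of $\nabla W$ gives three groups of terms.
\begin{itemize}
\item The diagonal terms are $\gamma_2^2/r_k^2$ and $\gamma_1^2\big((r_k-r_j)^{-2}+(r_k+r_j)^{-2}\big)$.
\item The cross terms between the $j$-sum and the $\gamma_2/r_k$ term are $2\gamma_1\gamma_2\cdot 2/(r_k^2-r_j^2)$.
\item The cross terms within the $j$-sum involve distinct $j\ne l$. The mixed products $\frac{1}{r_k-r_j}\cdot\frac{1}{r_k+r_j}$ combine, as in (\ref{squaring}), into multiples of $1/(r_k^2-r_j^2)$.
\end{itemize}
The main obstacle is the genuine three-body sums over distinct $j\ne l$, $j,l\ne k$. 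For these I would use the classical identity
\[
\frac{1}{(a-b)(a-c)}+\frac{1}{(b-a)(b-c)}+\frac{1}{(c-a)(c-b)}=0 .
\]
Applied in the variables $r^2$ to $\frac{2r_k}{r_k^2-r_j^2}\cdot\frac{2r_k}{r_k^2-r_l^2}$, it should make $\sum_k\sum_{j\ne l}$ collapse to a multiple of $\sum_k\sum_{j\neq k}1/(r_k^2-r_j^2)$. By (\ref{zero-sum}), all the sums of the form $\sum_k\sum_{j\ne k}1/(r_k^2-r_j^2)$ vanish; this is the same pairwise antisymmetry that killed the $2(n+2)$ term in Proposition \ref{prop:rf-simple}. (Alternatively, since $\gamma_1$ multiplies everything in the $j$-sum, this step is literally the $n+1\mapsto\gamma_2/\gamma_1$ version of that proposition, scaled by $\gamma_1^2$. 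The cancellation there holds for an arbitrary coefficient, because the $1/r_k$ cross term contributes only multiples of $\sum_k\sum_{j\neq k} 1/(r_k^2-r_j^2)=0$.) Hence
\[
\|\nabla W\|^2=\sum_k\frac{\gamma_2^2}{r_k^2}+2\gamma_1^2\sum_{i<j}\Big(\frac{1}{(r_i-r_j)^2}+\frac{1}{(r_i+r_j)^2}\Big),
\]
which is (up to sign) the BC Calogero--Moser potential $V$. Taking $\tfrac12\nabla$ of this gives exactly the right side of (\ref{eq:CM-BC}); differentiating $(r_k\mp r_j)^{-2}$ and $r_k^{-2}$ produces the cubes with the stated coefficients $-2\gamma_1^2$ and $-\gamma_2^2$.

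For the converse, set $e(t)=\dot r-\nabla W(r)$ along a solution of (\ref{eq:CM-BC}) that stays in the open chamber $r_1>\dots>r_d>0$, where everything is smooth. Then
\[
\dot e = \ddot r - D^2W\,\dot r = \tfrac12\nabla\|\nabla W\|^2 - D^2W\,\dot r = D^2W\,\nabla W - D^2W\,\dot r = -D^2W(r(t))\,e .
\]
This is a linear ODE for $e$ with continuous coefficients and $e(0)=0$, so uniqueness gives $e\equiv 0$. That is precisely (\ref{eq:1o-CM}) for all $t\ge0$ on the interval where the solution lives in the chamber.
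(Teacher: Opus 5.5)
Your proof is correct and has the same skeleton as the paper's: write (\ref{eq:1o-CM}) as $\dot r=\nabla W$, differentiate to get $\ddot r=D^2W\,\nabla W=\tfrac12\nabla\|\nabla W\|^2$, and identify $\|\nabla W\|^2$ with the $BC$ Calogero--Moser potential. The two substantive steps, however, are handled differently.

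\textbf{Identifying $\|\nabla W\|^2$.} The paper writes $W=\sum_{\alpha\in\Phi^+}\kappa_\alpha\log(\alpha\cdot r)$ and cites Dunkl's cancellation identity $\sum_{\alpha\ne\beta}\kappa_\alpha\kappa_\beta\,(\alpha\cdot\beta)/((\alpha\cdot r)(\beta\cdot r))=0$. This gives $\|\nabla W\|^2=\sum_\alpha\kappa_\alpha^2|\alpha|^2/(\alpha\cdot r)^2$ at once, and the argument works verbatim for any root system. You instead prove that identity by hand for $BC_d$. The two-body cross terms die by the antisymmetry (\ref{zero-sum}), and the $j\ne l$ terms by the three-point partial-fraction identity. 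That last step is cleaner than your wording (``should collapse to a multiple of \dots'') suggests. With $x=r^2$, for each triple the symmetrized sum $\sum_{\mathrm{cyc}} x_a/((x_a-x_b)(x_a-x_c))$ is a second divided difference of a linear function, so it vanishes exactly and no two-body remainder appears. Your explicit treatment of these terms is genuinely useful: the display (\ref{squaring}) in the proof of Proposition \ref{prop:rf-simple} omits them. Your ``alternatively'' shortcut is still fine, provided it rests on the statement of that proposition, which is correct, rather than on that display. Your route is elementary and self-contained; the paper's is shorter and uniform across root systems.

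\textbf{The converse.} Here your argument is the more rigorous one. The paper says that integrating (\ref{eq:CM-BC}) recovers (\ref{eq:1o-CM}) up to a constant. But identifying the antiderivative uses $\frac{d}{dt}\nabla W(r(t))=D^2W\,\dot r=\tfrac12\nabla\|\nabla W\|^2$, which already presupposes $\dot r=\nabla W$. Your linear equation $\dot e=-D^2W(r(t))\,e$ with $e(0)=0$ is a genuine uniqueness argument that closes this gap.
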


A similar relation between the ODEs describing the finite free heat flow (\ref{eqn:ffhf}) and the Calogero--Moser system of type $A$ is well known; see \cite{calogero2001classical, sokal2023motion}. Note that although the interaction between particles in the first-order system (\ref{eq:1o-CM}) is repulsive, the interaction in the second-order system (\ref{eq:CM-BC}) is attractive.  This apparent contradiction is resolved by observing that the first-order system at $t=0$ imposes a choice of initial velocities for the second-order system that results in the particles continuing to separate despite the attractive force.

\begin{proof}
    We show that (\ref{eq:1o-CM}) implies (\ref{eq:CM-BC}).  The converse statement then follows by observing that integrating both sides of (\ref{eq:CM-BC}) recovers (\ref{eq:1o-CM}) up to an unknown additive constant, which is determined by fixing $\dot{r}_k(0)$.
    
    The equations (\ref{eq:1o-CM}) can be written
    \begin{equation} \label{eq:1CM-rewrite}
    \dot{r} = \nabla H(r; \gamma_1, \gamma_2),
    \end{equation}
    where $\nabla$ indicates the gradient with respect to $(r_1, \hdots, r_d)$, and
    \[
    H(r; \gamma_1, \gamma_2) = \log \left[ \left( \prod_{i=1}^d r_i^{\gamma_2} \right) \prod_{1 \le i<j \le d} (r_i - r_j)^{\gamma_1} (r_i + r_j)^{\gamma_1} \right]. 
    \]
    On the other hand, the equations (\ref{eq:CM-BC}) can be written
    \[
    \ddot{r} = - \frac{1}{2} \nabla \Delta H (r; \gamma_1^2, \gamma_2^2 ),
    \]
    where $\Delta = \partial_{r_1}^2 + \cdots + \partial_{r_d}^2$ is the Laplacian.

    Differentiating (\ref{eq:1CM-rewrite}) by $t$ on both sides, we obtain
    \begin{align*}
    \ddot{r} &= \big[ D^2 H(r;\gamma_1, \gamma_2) \big] \dot{r} = \big[ D^2 H(r;\gamma_1, \gamma_2) \big] \nabla H(r; \gamma_1, \gamma_2) \\
    &= \frac{1}{2} \nabla \| \nabla H(r;\gamma_1, \gamma_2) \|^2,
    \end{align*}
    where $D^2$ is the Hessian in the variables $(r_1, \hdots, r_d)$. Thus, it remains to prove that
    \begin{equation} \label{eq:fg-identity}
    \nabla \| \nabla H (r; \gamma_1, \gamma_2 ) \|^2 = - \nabla \Delta H (r; \gamma_1^2, \gamma_2^2 ).
    \end{equation}
To simplify the calculation, we now rewrite both sides of the equation above in a more symmetrical form that will be familiar to readers acquainted with the theory of root systems. Let $(e_1, \hdots, e_d)$ be the standard basis of $\mathbb{R}^d$, and define
    \begin{align*}
    \Phi^+ &= \{e_i \pm e_j \ | \ 1 \le i < j \le d \} \cup \{e_i \ | \ 1 \le i \le d \}.
    \end{align*}
    For $\alpha \in \Phi^+$, set $\kappa_\alpha = \gamma_2$ if $\alpha = e_i$ for some $i$, and $\kappa_\alpha = \gamma_1$ if $\alpha = e_i \pm e_j$ for some $i < j$.  We then can write
    \begin{align*}
    H(r; \gamma_1, \gamma_2) &= \sum_{\alpha \in \Phi^+} \kappa_\alpha \log(\alpha \cdot r), \\
    H(r; \gamma_1^2, \gamma_2^2 ) &= \sum_{\alpha \in \Phi^+} \kappa_\alpha^2 \log(\alpha \cdot r),
    \end{align*}
    and we compute
    \begin{align}
        \nonumber - \nabla \Delta H (r; \gamma_1^2, \gamma_2^2 ) &= - 2 \sum_{\alpha \in \Phi^+} \kappa_\alpha^2 \frac{|\alpha|^2 \alpha}{(\alpha \cdot r)^3}, \\
        \nonumber \nabla \| \nabla H(r;\gamma_1, \gamma_2) \|^2 &= - 2\sum_{\alpha,\beta \in \Phi^+} \kappa_\alpha \kappa_\beta \frac{(\alpha \cdot \beta)\alpha}{(\alpha \cdot r)^2 (\beta \cdot r)} \\
        \nonumber &= - 2\sum_{\alpha \in \Phi^+} \kappa_\alpha^2 \frac{|\alpha|^2 \alpha}{(\alpha \cdot r)^3} - 2\sum_{\alpha \ne \beta} \kappa_\alpha \kappa_\beta \frac{(\alpha \cdot \beta)\alpha}{(\alpha \cdot r)^2 (\beta \cdot r)}.
    \end{align}
    Therefore it suffices to show
    \[
    \sum_{\alpha \ne \beta} \kappa_\alpha \kappa_\beta \frac{(\alpha \cdot \beta)\alpha}{(\alpha \cdot r)^2 (\beta \cdot r)} = 0.
    \]
Notice that
    \[
    \sum_{\alpha \ne \beta} \kappa_\alpha \kappa_\beta \frac{(\alpha \cdot \beta)\alpha}{(\alpha \cdot r)^2 (\beta \cdot r)} = -\frac{1}{2} \nabla \left( \sum_{\alpha \ne \beta} \kappa_\alpha \kappa_\beta \frac{\alpha \cdot \beta}{(\alpha \cdot r) (\beta \cdot r)} \right).
    \]
The sum on the right-hand side vanishes by a cancellation identity due to Dunkl \cite[Proposition 1.7(1)]{Dunkl2}, completing the proof.
\end{proof}

\subsection{Radial Dunkl processes}
\label{sec:dunklprocs}

The \emph{radial Dunkl process} of type $BC$ with parameters $k_1, k_2 \ge 0$ is a diffusion process on the cone $\{x \in \R^d \ | \ x_1 > \hdots > x_d > 0\}$ with generator
\begin{equation} \label{eq:dunkl-gen}
    \mathcal{G} = \frac{1}{2}\sum_{i=1}^d \partial_{x_i}^2 + \sum_{i=1}^d \left[ \sum_{j : j \ne i} \left( \frac{k_1}{x_i - x_j} + \frac{k_1}{x_i + x_j} \right) + \frac{k_2}{x_i} \right] \partial_{x_i}.
\end{equation}
It can be constructed as the radial part of a Dunkl process, which is a generalization of Brownian motion in which the ordinary partial derivatives in the generator are replaced by the Dunkl differential-difference operators \cite{Demni-DunklProcs, Rosler-DunklOps, RoslerVoit-Markov}. In the special case where $k_1 = 1$, $k_2 = n - d + 1/2$, for an integer $n \ge d$, the radial Dunkl process of type $BC$ coincides with the process of ordered singular values of a Brownian motion on the space of $d\times n$ complex matrices \cite{guionnet2021large}.  Radial Dunkl processes and their connections to random matrix theory, representation theory, and special functions have been extensively studied; see \cite{AnkerDunklNotes, Chib-SP, Demni-DunklProcs, GY1, GY2, HuangMcSwiggen2024, Rosler-DunklOps, RoslerVoit-Markov} and references therein.

The \emph{freezing limit} of the radial Dunkl process of type $BC$ is obtained by sending the parameters to infinity at a common rate, that is, by replacing $(k_1, k_2)$ by $(\kappa k_1, \kappa k_2)$ and taking the limit $\kappa \to \infty$.  In this limit, the diffusion term in the generator (\ref{eq:dunkl-gen}) is suppressed relative to the drift terms, and the process concentrates on the solution of the deterministic ODE system obtained by retaining only the drift:
\begin{equation} \label{eq:dunkl-freeze}
    \dot{x}_i = \sum_{j : j \ne i} \left( \frac{k_1}{x_i - x_j} + \frac{k_1}{x_i + x_j} \right) + \frac{k_2}{x_i}.
\end{equation}
In the special case with $k_1 = 1$ and $k_2 = n+1$, the system (\ref{eq:dunkl-freeze}) coincides with the root evolution ODE (\ref{eq:ode}) studied in this paper.  In other words, at the level of the root dynamics, the rectangular finite free heat flow is a freezing limit of a radial Dunkl process of type $BC$.

Freezing limits of Dunkl processes have been studied in a number of works, motivated in part by their connections to random matrix theory and to the equilibrium measures of $\beta$-ensembles in the limit $\beta \to \infty$.  Notable early contributions are due to Andraus--Katori--Miyashita \cite{AKM1, AKM2} and Andraus--Miyashita \cite{Andraus-Miyashita}; in particular, these authors proved probabilistic versions of Theorem \ref{thm:long-time} above, showing that in the freezing limit the scaled process eventually concentrates on the zeros of Hermite polynomials in the type $A$ case (corresponding to the finite free heat flow (\ref{eqn:ffhf})) and on the square roots of zeros of Laguerre polynomials in the type $BC$ case.  Central limit theorems around the frozen limit were established by Voit \cite{Voit-CLT} and Andraus--Voit \cite{Andraus-Voit-CLT}, and high-dimensional limits of the processes were expressed in terms of free convolutions by Voit--Woerner \cite{VW-limits}.

\subsection{Mean-curvature evolution of group orbits}

In this final subsection, we describe a differential-geometric interpretation of the
rectangular finite free heat flow in terms of mean-curvature
flow of a family of group orbits in the space of rectangular
matrices.  This is the rectangular analogue of a result of Huang--Inauen--Menon \cite{HuangInauenMenon2023}, who showed
that the $\beta \to \infty$ limit of Dyson Brownian motion is
equivalent to expansion by mean curvature of isospectral orbits in the space of Hermitian matrices.

Let $m \ge d$ be a positive integer and consider the space
$\mathbb{R}^{d \times m}$ of real
$d \times m$ matrices, equipped with the Frobenius inner
product $\langle A, B \rangle = \mathrm{Tr}(AB^T)$.  The
group $G = \mathrm{O}(d) \times \mathrm{O}(m)$ acts on
$\mathbb{R}^{d \times m}$ isometrically by $(U, V) \cdot A = UAV^T$,
and the orbit of a matrix $A$ under this action is
\[
\mathcal{O}_A = \{ UAV^T \ | \ U \in \mathrm{O}(d),\; V \in \mathrm{O}(m) \}.
\]
The matrices in $\mathcal{O}_A$ are precisely those with the
same singular values as $A$.  For $A$ with singular value
decomposition $A = P \Sigma Q^T$, with $P \in \mathrm{O}(d)$,
$Q \in \mathrm{O}(m)$, and
$\Sigma = \mathrm{diag}(r_1, \ldots, r_d) I_{d \times m}$
with $r_1 > \cdots > r_d > 0$, the isotropy group of
$\Sigma$ is
\[
\mathrm{Stab}(\Sigma)
  = \{ (\varepsilon,\, \mathrm{diag}(\varepsilon, W)) \ | \
       \varepsilon \in \{\pm 1\}^d,\;
       W \in \mathrm{O}(m - d) \},
\]
so $\mathcal{O}_A \cong G / \mathrm{Stab}(\Sigma)$
is a compact smooth manifold of dimension $d(m-1)$, which comes with a $G$-invariant Riemannian metric induced by the Frobenius inner product on $\mathbb{R}^{d \times m}$.

For a submanifold $N$ embedded in a Riemannian manifold $(M, g)$, the \emph{mean curvature vector} at a point $x \in N$ is the trace of the second fundamental form at $x$:
\[
\mathbf{h}(x) = \sum_\alpha II(e_\alpha, e_\alpha) \in T_x^\perp N,
\]
where $\{e_\alpha\}$ is any orthonormal basis of $T_x N$ and
$II(X,Y) = (\nabla_X Y)^\perp$ is the second fundamental form,
with $(\cdot)^\perp$ denoting projection onto the normal space $T_x^\perp N$.
In our setting, $M = \mathbb{R}^{d \times m}$ is a Euclidean
space and $N = \mathcal{O}_A$, so $\nabla$ is the flat connection and $II(X,Y)$ is
simply the component of the ambient derivative $D_X Y$ normal
to $\mathcal{O}_A$.

A calculation by Pacini \cite[p.~3350]{Pac03} shows that the mean curvature vector of $\mathcal{O}_A$ at $A$ is
\begin{equation} \label{eq:pacini-rect}
\mathbf{h}(A) = -\mathrm{grad}_A \log \mathrm{vol}(\mathcal{O}_A),
\end{equation}
where $\mathrm{grad}_A$ denotes the gradient with respect to the Frobenius metric.  The relation between mean curvature flow of $G$-orbits and the rectangular finite free heat flow is a consequence of the following fact: for particular choices of matrix dimensions, the log volume of $\mathcal{O}_A$ coincides, up to an additive constant, with the entropy $H(r)$ defined in (\ref{eqn:rent-def}).

\begin{lemma} \label{lem:orbit-vol}
For $A \in \R^{d \times m}$ with singular values $r_1 > \cdots > r_d > 0$, the Riemannian volume of $\mathcal{O}_A$ satisfies
\begin{equation} \label{eq:log-vol}
\log \mathrm{vol}(\mathcal{O}_A)
  = \sum_{1 \le i < j \le d}
      \log(r_i^2 - r_j^2)
    + (m - d) \sum_{i=1}^d \log r_i
    + C,
\end{equation}
where $C$ is a constant that does not depend on the singular values.
\end{lemma}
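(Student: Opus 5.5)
The plan is to compute the volume of the orbit via the orbit map $\pi : G \to \mathcal{O}_A$, $(U,V) \mapsto U\Sigma V^T$, and its differential. First, since the Frobenius metric is $G$-invariant, we may take $A = \Sigma$. Because $\mathcal{O}_A \cong G/\mathrm{Stab}(\Sigma)$, we have
\[
\mathrm{vol}(\mathcal{O}_\Sigma) = \frac{\mathrm{vol}(G)}{|\cdot|}\,\cdot J(\Sigma),
\]
up to a factor independent of $r$. Here $J(\Sigma)$ is the Jacobian of $d\pi$ at the identity, restricted to the orthogonal complement of the isotropy Lie algebra $\mathfrak{o}(m-d)$ (embedded in the lower-right block). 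We equip $\mathfrak{g} = \mathfrak{o}(d)\oplus\mathfrak{o}(m)$ with a bi-invariant metric. The isotropy group is compact and has the same dimension for every $r$ with $r_1>\dots>r_d>0$, so its volume contributes only to the constant $C$. The whole problem therefore reduces to computing $\det$ of $d\pi_e$ on a complement of $\mathfrak{o}(m-d)$, as a function of $r$.

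Next, compute $d\pi_e(X,Y) = X\Sigma - \Sigma Y$ for skew-symmetric $X \in \mathfrak{o}(d)$ and $Y \in \mathfrak{o}(m)$. Decompose $Y$ into blocks: $Y_{11} \in \mathfrak{o}(d)$, an off-diagonal block $Y_{12} \in \R^{d\times(m-d)}$, and $Y_{22}\in\mathfrak{o}(m-d)$. Write $D = \mathrm{diag}(r)$. Then $X\Sigma - \Sigma Y$ has left $d\times d$ block $XD - DY_{11}$ and right $d\times(m-d)$ block $-DY_{12}$, while $Y_{22}$ is killed, matching the isotropy. The image splits into orthogonal pieces.

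For each pair $i<j$, the entries $(X_{ij}, Y_{ij})$ map to the $(i,j)$ and $(j,i)$ entries of $XD - DY_{11}$, namely $X_{ij}r_j - r_iY_{ij}$ and $-X_{ij}r_i + r_jY_{ij}$, using $X_{ji}=-X_{ij}$. This $2\times 2$ linear map has matrix $\begin{pmatrix} r_j & -r_i\\ -r_i & r_j\end{pmatrix}$, with determinant $r_j^2 - r_i^2$, so it contributes $|r_i^2-r_j^2|$. The diagonal entries of the image vanish; they are normal directions, consistent with the dimension count $d(m-1)$. For each $i$ and each of the $m-d$ columns of $Y_{12}$, the entry $(Y_{12})_{ik}$ maps to $-r_i (Y_{12})_{ik}$, giving the factor $r_i^{m-d}$. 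The parametrizing coordinates have norms fixed by the chosen bi-invariant metric, and the image coordinates have Frobenius norms that are constant multiples of the entries, so all such normalizations are $r$-independent constants.

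Multiplying these factors gives
\[
\mathrm{vol}(\mathcal{O}_A) = c\prod_{i<j}(r_i^2-r_j^2)\prod_i r_i^{m-d},
\]
and taking logarithms yields the lemma. The main obstacle is making the homogeneous-space volume formula rigorous, namely $\mathrm{vol}(G/K)$ equals the Jacobian of $d\pi_e$ on $\mathfrak{k}^\perp$ times a constant. This holds because $\pi$ is $G$-equivariant, so the Jacobian is constant along the orbit, and $G\to G/K$ is a Riemannian submersion for the bi-invariant metric. The remaining care is in checking that the finite group $\{\pm1\}^d$ and the $\mathrm{O}(m-d)$ factor of the stabilizer contribute only an $r$-independent factor.
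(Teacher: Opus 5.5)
Your proposal is correct and follows essentially the same route as the paper. Both arguments compute the differential $(X,Y)\mapsto X\Sigma-\Sigma Y$ of the orbit map on a complement of the isotropy algebra $\mathfrak{o}(m-d)$ and split it the same way: a $2\times 2$ block for each pair $i<j$, contributing $r_i^2-r_j^2$, and the $d(m-d)$ mixing generators, contributing $\prod_i r_i^{m-d}$. The paper phrases this as the Gram determinant $\prod_{i<j}(r_i^2-r_j^2)^2\prod_i r_i^{2(m-d)}$, whose square root you compute directly as a Jacobian. Two small fixes: your displayed volume formula has a stray placeholder, and the stabilizer factor is $r$-independent because $\mathrm{Stab}(\Sigma)$ is the same group for all $r_1>\dots>r_d>0$, not merely a group of the same dimension.
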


\begin{proof}
Let $\mathfrak{g} = \mathfrak{o}(d) \oplus \mathfrak{o}(m)$ be the Lie algebra
of $G$, where $\mathfrak{o}(d)$ denotes the space of $d\times d$ real
skew-symmetric matrices. Fix a bi-invariant metric on $G$, which determines an invariant inner product on $\mathfrak{g}$.  Since $G$ acts transitively on $\mathcal{O}_A$,
the tangent space $T_\Sigma \mathcal{O}_A$ is the image of the differential
of the map $\Psi: (U,V)\mapsto U\Sigma V^T$ at the identity $(I_{d \times d}, I_{m \times m}) \in G$. We write $\phi_\Sigma = \mathrm{d} \Psi |_{(I_{d \times d}, I_{m \times m})} : \mathfrak{g} \to \R^{d\times m}$ for this differential map, which is given explicitly by
\[
\phi_\Sigma(K,L) = K\Sigma - \Sigma L.
\]
The kernel of $\phi_\Sigma$ is the isotropy subalgebra
\[
\mathrm{Lie}(\mathrm{Stab}(\Sigma)) = \{(K,L)\in\mathfrak{g} \mid K\Sigma = \Sigma L\},
\]
so $\phi_\Sigma$ restricts to an isomorphism from $\mathrm{Lie}(\mathrm{Stab}(\Sigma))^\perp$
onto $T_\Sigma\mathcal{O}_A$. The volume of $\mathcal{O}_A$ in the Frobenius
metric is therefore proportional to $\sqrt{\det g|_\Sigma}$, where $g|_\Sigma$
is the Gram matrix of the Frobenius inner products
$\langle\phi_\Sigma(K,L), \phi_\Sigma(K',L')\rangle$ on any fixed basis of $\mathrm{Lie}(\mathrm{Stab}(\Sigma))^\perp$. The proportionality constant depends only on the chosen basis and bi-invariant metric on $G$; in particular, it is independent of the singular values $r_1, \hdots, r_d$. Since we only need $\log\mathrm{vol}(\mathcal{O}_A)$
up to an additive constant, it therefore suffices to compute $\frac{1}{2}\log\det g|_\Sigma$.

We begin by identifying $\mathrm{Lie}(\mathrm{Stab}(\Sigma))$ and an explicit
basis for its orthogonal complement. Write $L \in \mathfrak{o}(m)$ in block
form
\[
L = \begin{pmatrix} L_{11} & L_{12} \\ L_{21} & L_{22}
\end{pmatrix},
\]
with $L_{11} \in \R^{d\times d}$, $L_{22} \in \R^{(m-d)\times(m-d)}$, and $L_{12} = -L_{21}^T \in \R^{d \times (m-d)}$. The condition $K\Sigma = \Sigma L$ gives
$L_{12} = 0$ from the last $m-d$ columns, and $K_{ij} r_j = r_i (L_{11})_{ij}$
from the first $d$ columns. Using skew-symmetry of both $K$ and $L_{11}$,
the latter implies $K_{ij}(r_i^2 - r_j^2) = 0$ for all $i \neq j$, and
since the $r_i$ are distinct this requires $K_{ij} = 0$ and hence
$(L_{11})_{ij} = 0$ for all $i \neq j$. Since $K$ and $L_{11}$ are
skew-symmetric their diagonal entries vanish as well, so $K = 0$ and
$L_{11} = 0$. The remaining freedom
is $L_{22} \in \mathfrak{o}(m-d)$, giving
\[
\mathrm{Lie}(\mathrm{Stab}(\Sigma)) = \{(0, \mathrm{diag}(0, L_{22})) \mid
L_{22} \in \mathfrak{o}(m-d)\},
\]
which has dimension $\binom{m-d}{2}$. We can thus verify \begin{align*}
\dim \mathrm{Lie}(\mathrm{Stab}(\Sigma))^\perp &= \dim \mathfrak{g} - \dim \mathrm{Lie}(\mathrm{Stab}(\Sigma)) \\ &=
\binom{d}{2} + \binom{m}{2} - \binom{m-d}{2} = d(m-1),
\end{align*}
matching our earlier calculation of $\dim \mathcal{O}_A$.

We now exhibit an explicit basis of $\mathrm{Lie}(\mathrm{Stab}(\Sigma))^\perp \cong T_\Sigma \mathcal{O}_A$, consisting of the following three families of tangent vectors.

\medskip
\noindent\textit{(i) Generators of $\mathfrak{o}(d)$.}
For $1 \le i < j \le d$, let $F_{ij} = E_{ij} - E_{ji} \in
\mathfrak{o}(d)$. A direct computation gives
\[
\phi_\Sigma(F_{ij}, 0) = F_{ij}\Sigma = r_j E_{ij} - r_i E_{ji},
\]
with squared Frobenius norm $r_i^2 + r_j^2$.

\medskip
\noindent\textit{(ii) Off-diagonal generators of the upper $d \times d$ block
of $\mathfrak{o}(m)$.}
For $1 \le i < j \le d$, let $\tilde{F}_{ij} = E_{ij} - E_{ji} \in
\mathfrak{o}(m)$, where $E_{ij}$ now denotes the $m\times m$ elementary matrix. Another direct computation gives
\[
\phi_\Sigma(0, \tilde{F}_{ij}) = -\Sigma\tilde{F}_{ij} = -r_i E_{ij} + r_j E_{ji},
\]
with squared Frobenius norm $r_i^2 + r_j^2$.

Since the tangent vectors $\phi_\Sigma(F_{ij}, 0)$ and $\phi_\Sigma(0, \tilde{F}_{ij})$ from families (i) and (ii) have nonzero entries only at positions $(i,j)$ and $(j,i)$ with $i,j \le d$,
vectors in either family associated with distinct pairs $\{i,j\} \neq \{i',j'\}$ are mutually
orthogonal. For a fixed pair $(i,j)$, the cross inner product between the two
families is
\[
\langle \phi_\Sigma(F_{ij},0),\, \phi_\Sigma(0,\tilde{F}_{ij}) \rangle =
\langle r_jE_{ij} - r_iE_{ji},\, -r_iE_{ij} + r_jE_{ji}\rangle = -2r_ir_j.
\]
The $2\times 2$ Gram block for each pair $(i,j)$ is therefore
\[
\begin{pmatrix} r_i^2 + r_j^2 & -2r_ir_j \\ -2r_ir_j & r_i^2 + r_j^2
\end{pmatrix},
\]
with determinant $(r_i^2 - r_j^2)^2$.

\medskip
\noindent\textit{(iii) Generators of $\mathfrak{o}(m)$ mixing the $d$ and
$(m-d)$ blocks.}
For $1 \le i \le d$ and $d < j \le m$, let $G_{ij} = E_{ij} - E_{ji} \in
\mathfrak{o}(m)$. Then
\[
\phi_\Sigma(0, G_{ij}) = -\Sigma G_{ij} = -r_i E_{ij},
\]
with squared Frobenius norm $r_i^2$. These $d(m-d)$ vectors are pairwise
orthogonal, since they are supported on distinct entries $(i,j)$ with $j > d$.
They are also orthogonal to both families (i) and (ii), since those vectors are
supported on entries with both indices at most $d$.

\medskip
The three families together comprise $\binom{d}{2} + \binom{d}{2} + d(m-d) =
d(m-1) = \dim\mathcal{O}_A$ vectors. From the above calculations, we find the Gram determinant
\[
\prod_{i<j}(r_i^2-r_j^2)^2\cdot\prod_i r_i^{2(m-d)}.\]
Since the
$r_i$ are distinct and positive, this determinant is nonzero, confirming that these vectors are linearly independent and hence form a basis of $\mathrm{Lie}(\mathrm{Stab}(\Sigma))^\perp$. The quantity above is therefore equal to $\det g|_\Sigma \propto \mathrm{vol}(\mathcal{O}_A)^2$, and taking half the logarithm gives \eqref{eq:log-vol}.
\end{proof}

The main result of this subsection is now almost immediate.

\begin{theorem} \label{thm:mcf}
Let $m = n + d + 1$.  Suppose that $A(t) \in \R^{d \times m}$ evolves by $-1$ times mean curvature on the orbit $\mathcal{O}_{A(t)}$, that is,
\begin{equation} \label{eq:mcf}
\dot A(t) = -\mathbf{h}(A(t)), \qquad t \ge 0,
\end{equation}
and that $A(0)$ has distinct positive singular values $r_1(0) > \hdots > r_d(0) > 0$. Let
\[
A(0) = P \, \mathrm{diag}\big(r_1(0), \hdots, r_d(0)\big) \, I_{d \times m} Q^T
\]
be the singular value decomposition of $A(0)$. Then
\[
A(t) = P \, \mathrm{diag}\big(r_1(t), \hdots, r_d(t)\big) \, I_{d \times m} Q^T,
\]
where the singular values $r_1(t) > \cdots > r_d(t) > 0$ of $A(t)$ satisfy the ODE system \eqref{eq:ode} describing the root evolution of the rectangular finite free heat flow.
\end{theorem}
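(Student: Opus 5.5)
The plan is to combine Pacini's formula (\ref{eq:pacini-rect}) with Lemma \ref{lem:orbit-vol}. This rewrites the flow (\ref{eq:mcf}) as the Frobenius gradient flow $\dot A = \mathrm{grad}_A \log \mathrm{vol}(\mathcal{O}_A)$. We then show that this flow preserves the singular vectors and reduces to $\dot r = \nabla H(r)$, which is (\ref{eq:ode}) by Proposition \ref{prop:ode-grad}.

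First, define $F(A) = \log \mathrm{vol}(\mathcal{O}_A)$ on the open set of matrices with distinct positive singular values. By Lemma \ref{lem:orbit-vol} with $m-d = n+1$,
\[
F(A) = \sum_{i<j}\log(r_i^2-r_j^2) + (n+1)\sum_i \log r_i + C = H(r) + C,
\]
since $\log(r_i^2-r_j^2) = \log(r_i-r_j)+\log(r_i+r_j)$. So $F$ is a smooth $G$-invariant function, depending on $A$ only through its singular values.

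Second, we compute $\mathrm{grad}_A F$ at $A = P\Sigma Q^T$. Since $F$ is invariant, its gradient is orthogonal to $T_A\mathcal{O}_A$, because $F$ is constant on the orbit. By equivariance we may assume $A=\Sigma$, and we decompose the ambient space as $T_\Sigma\mathcal{O}_A \oplus N_\Sigma$.

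From the explicit tangent basis in the proof of Lemma \ref{lem:orbit-vol}, the tangent space at $\Sigma$ contains every entry with $j>d$ (family (iii)). For each pair $i<j\le d$, it also contains the span of $E_{ij}$ and $E_{ji}$, since the $2\times 2$ Gram block is nondegenerate. Counting dimensions, the normal space $N_\Sigma$ is therefore exactly the diagonal matrices $\mathrm{diag}(c_1,\dots,c_d)I_{d\times m}$.

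Along this normal slice, $F(\mathrm{diag}(r+c)I_{d\times m}) = H(r+c)+C$ for small $c$. Hence $\mathrm{grad}_\Sigma F = \mathrm{diag}(\nabla H(r))\,I_{d\times m}$, and at a general $A$,
\[
\mathrm{grad}_A F = P\,\mathrm{diag}(\nabla H(r))\,I_{d\times m}Q^T.
\]

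Third, we conclude by uniqueness of ODE solutions. Let $r(t)$ solve $\dot r = \nabla H(r)$ with $r(0)$ equal to the singular values of $A(0)$. This solution stays in the cone $r_1>\dots>r_d>0$ for all $t\ge 0$, by the entropy monotonicity argument from Theorem \ref{thm:root-separation}: $H$ increases, so $r$ cannot reach the boundary where $H=-\infty$. The ansatz
\[
\tilde A(t) = P\,\mathrm{diag}(r(t))\,I_{d\times m}Q^T
\]
has derivative $P\,\mathrm{diag}(\dot r)I_{d\times m}Q^T$. This equals $\mathrm{grad}_{\tilde A}F = -\mathbf{h}(\tilde A)$ by the second step. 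Since the vector field $\mathrm{grad}\,F$ is smooth (locally Lipschitz) on the open set of matrices with distinct positive singular values, uniqueness gives $A(t) = \tilde A(t)$. Finally, $\dot r = \nabla H(r)$ is exactly (\ref{eq:ode}) by Proposition \ref{prop:ode-grad}.

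The main obstacle is the second step: identifying the normal space and justifying that the gradient of a singular-value function is the diagonal matrix of partial derivatives with the same singular vectors. I plan to handle it directly via the tangent basis already computed in Lemma \ref{lem:orbit-vol}, rather than citing a general von Neumann-type result. The remaining care is in the sign convention: Pacini's formula gives $-\mathbf{h} = +\mathrm{grad}\,F$, which is what produces outward expansion.
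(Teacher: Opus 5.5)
Your proposal is correct and follows essentially the same route as the paper. Pacini's formula together with Lemma~\ref{lem:orbit-vol} (with $m-d=n+1$) turns the flow into $\dot A=\mathrm{grad}_A F$ with $F=H(r)+C$; the gradient of this invariant function is normal to the orbit; and the normal space consists of the matrices $P\,D\,I_{d\times m}Q^T$, so everything reduces to $\dot r=\nabla H(r)$.

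The only difference is that you justify two points the paper merely asserts. You derive the form of the normal space from the tangent basis in the lemma, and you prove that $P,Q$ stay fixed via the explicit solution $\tilde A(t)$ and ODE uniqueness. For the global existence of $r(t)$ in that step, the bound $H(r(t))\ge H(r(0))$ should be combined with the linear growth of $\sum_k r_k(t)^2$ from Proposition~\ref{prop:conserved}, which keeps $r(t)$ in a compact subset of the cone on finite time intervals.
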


\begin{proof}
Since $\log \mathrm{vol}(\mathcal{O}_A)$ depends only on the
singular values of $A$ by \eqref{eq:log-vol}, its gradient
$\mathbf{h}(A) = -\mathrm{grad}_A \log \mathrm{vol}(\mathcal{O}_A)$
lies in the normal space to $\mathcal{O}_A$ at $A$.  The normal
space to $\mathcal{O}_A$ at $A = P\Sigma Q^T$ consists of
matrices of the form $P D \, I_{d \times m} Q^T$ where $D$ is a real diagonal
$d \times d$ matrix; in other words, the normal directions are those that change the singular values while fixing the singular vectors.  Therefore the
evolution \eqref{eq:mcf} has no tangential component, and $P$
and $Q$ remain constant, giving
\[
A(t) = P \, \mathrm{diag}\big(r_1(t), \hdots, r_d(t)\big)
  \, I_{d \times m} Q^T
\]
for all $t \ge 0$.

It remains to show that the singular values satisfy the ODE
system \eqref{eq:ode}.  Comparing \eqref{eq:log-vol} with the
rectangular entropy \eqref{eqn:rent-def},
\[
H(r) = \sum_{i < j} \log(r_i^2 - r_j^2)
      + (n+1) \sum_{i=1}^d \log r_i,
\]
we see that $\log \mathrm{vol}(\mathcal{O}_A) = H(r) +
\mathrm{const}$ when $m = n + d + 1$.  Therefore the component
of $\mathbf{h}(A(t))$ in the direction of the $k$-th singular
value is
\[
-\frac{\partial}{\partial r_k} \log \mathrm{vol}(\mathcal{O}_A)
  = -\frac{\partial H}{\partial r_k}(r(t)),
\]
and the evolution \eqref{eq:mcf} by $-1$ times mean
curvature gives
\[
\dot{r}_k(t)
  = -\left( -\frac{\partial H}{\partial r_k}(r(t)) \right)
  = \frac{\partial H}{\partial r_k}(r(t)),
\]
which is precisely the gradient flow \eqref{eq:ode-grad}, and
hence equivalent to the ODE system \eqref{eq:ode}.
\end{proof}

\section*{Acknowledgements}

The work of C.C. is partially supported by the National Science Foundation under grant number DMS-2348139.
C.C. would like to thank Academia Sinica for hosting him during his research visit in Dec.~2025.
The work of C.M. is partially supported by the National Science and Technology Council of Taiwan under grant number 113WIA0110762.

\bibliography{refs}

\providecommand{\bysame}{\leavevmode\hbox to3em{\hrulefill}\thinspace}
\providecommand{\MR}{\relax\ifhmode\unskip\space\fi MR }
\providecommand{\MRhref}[2]{%
  \href{http://www.ams.org/mathscinet-getitem?mr=#1}{#2}
}
\providecommand{\href}[2]{#2}
\begin{thebibliography}{10}

\bibitem{AKM1}
S.~Andraus, M.~Katori, and S.~Miyashita, \emph{Interacting particles on the line and {D}unkl intertwining operator of type {A}: Application to the freezing regime}, Journal of Physics A: Mathematical and Theoretical \textbf{45} (2012), 395201.

\bibitem{AKM2}
\bysame, \emph{Two limiting regimes of interacting {B}essel processes}, Journal of Physics A: Mathematical and Theoretical \textbf{47} (2014), 235201.

\bibitem{Andraus-Miyashita}
S.~Andraus and S.~Miyashita, \emph{Two-step asymptotics of scaled {D}unkl processes}, Journal of Mathematical Physics \textbf{56} (2015), 103302.

\bibitem{Andraus-Voit-CLT}
S.~Andraus and M.~Voit, \emph{Central limit theorems for multivariate {B}essel processes in the freezing regime {II}: The covariance matrices}, Journal of Approximation Theory \textbf{246} (2019), 65--84.

\bibitem{AnkerDunklNotes}
J.-P. Anker, \emph{An introduction to {Dunkl} theory and its analytic aspects}, Analytic, Algebraic and Geometric Aspects of Differential Equations (G.~Filipuk, Y.~Haraoka, and S.~Michalik, eds.), Birkh\"auser, Basel, 2015, \url{https://arxiv.org/abs/1611.08213}, pp.~3--58.

\bibitem{arizmendi2018cumulants}
Octavio Arizmendi and Daniel Perales, \emph{Cumulants for finite free convolution}, Journal of Combinatorial Theory, Series A \textbf{155} (2018), 244--266.

\bibitem{BenaychGeorges2007}
Florent Benaych-Georges, \emph{Infinitely divisible distributions for rectangular free convolution: classification and matricial interpretation}, Probability Theory and Related Fields \textbf{139} (2007), 143--189.

\bibitem{benaych2009rectangular}
\bysame, \emph{Rectangular random matrices, related convolution}, Probability Theory and Related Fields \textbf{144} (2009), 471--515.

\bibitem{BenaychGeorges2009entropy}
\bysame, \emph{Rectangular random matrices, related free entropy and free {F}isher's information}, Journal of Operator Theory \textbf{62} (2009), 371--419.

\bibitem{biane1997free}
Philippe Biane, \emph{Free convolution with a semi-circular distribution}, Bulletin des Sciences Math{\'e}matiques \textbf{121} (1997), no.~2, 117--136.

\bibitem{BianeVoiculescu2001}
Philippe Biane and Dan Voiculescu, \emph{A free probability analogue of the {Wasserstein} metric on the trace-state space}, Geometric and Functional Analysis \textbf{11} (2001), no.~6, 1125--1138.

\bibitem{calogero1978motion}
Francesco Calogero, \emph{Motion of poles and zeros of special solutions of nonlinear and linear partial differential equations and related ``solvable'' many-body problems}, Il Nuovo Cimento B (1971-1996) \textbf{43} (1978), no.~2, 177--241.

\bibitem{calogero2001classical}
\bysame, \emph{Classical many-body problems amenable to exact treatments}, Lecture Notes in Physics Monographs, vol.~66, Springer-Verlag, Berlin Heidelberg, 2001.

\bibitem{Chib-SP}
O.~Chybiryakov, \emph{Skew-product representations of multidimensional {D}unkl {M}arkov processes}, Annales de l'Institut Henri Poincar\'e -- Probabilit\'es et Statistiques \textbf{44} (2008), 593--611, \url{https://arxiv.org/abs/0808.3033}.

\bibitem{cole1951}
Julian~D. Cole, \emph{On a quasi-linear parabolic equation occurring in aerodynamics}, Quarterly of Applied Mathematics \textbf{9} (1951), no.~3, 225--236.

\bibitem{csordas1994lehmer}
George Csordas, Wayne Smith, and Richard~S. Varga, \emph{Lehmer pairs of zeros, the {D}e {B}ruijn-{N}ewman constant {$\Lambda$}, and the {R}iemann {H}ypothesis}, Constructive Approximation \textbf{10} (1994), no.~1, 107--129.

\bibitem{Cuenca2024}
Cesar Cuenca, \emph{Cumulants in rectangular finite free probability and beta-deformed singular values}, arXiv preprint (2024), \url{https://arxiv.org/abs/2409.04305}.

\bibitem{Demni-DunklProcs}
N.~Demni, \emph{Radial {D}unkl processes: existence, uniqueness and hitting time}, Comptes Rendus Math\'ematique \textbf{347} (2009), 1125--1128.

\bibitem{Dunkl2}
C.~F. Dunkl, \emph{Differential–difference operators associated to reflection groups}, Transactions of the American Mathematical Society \textbf{311} (1989), 167--183.

\bibitem{edelman2005polynomial}
Alan Edelman and N.~Raj Rao, \emph{The polynomial method for random matrices}, Foundations of Computational Mathematics \textbf{5} (2005), no.~4, 341--391.

\bibitem{GY1}
L.~Gallardo and M.~Yor, \emph{Some new examples of {M}arkov processes which enjoy the time-inversion property}, Probability Theory and Related Fields \textbf{132} (2005), 150--162.

\bibitem{GY2}
\bysame, \emph{A chaotic representation property of the multidimensional {D}unkl processes}, Annals of Probability \textbf{34} (2006), 1530–1549, \url{https://arxiv.org/abs/math/0609679}.

\bibitem{gribinski2024theory}
Aur{\'e}lien Gribinski, \emph{A theory of singular values for finite free probability}, Journal of Theoretical Probability \textbf{37} (2024), no.~2, 1257--1298.

\bibitem{gribinski2022rectangular}
Aur{\'e}lien Gribinski and Adam~W. Marcus, \emph{A rectangular additive convolution for polynomials}, Combinatorial Theory \textbf{2} (2022), no.~1.

\bibitem{guionnet2021large}
A.~Guionnet and J.~Huang, \emph{Asymptotics of rectangular spherical integrals}, Journal of Functional Analysis \textbf{285} (2023), 110144, \url{https://arxiv.org/abs/2106.07146}.

\bibitem{hall2025heat}
Brian~C. Hall and Ching-Wei Ho, \emph{The heat flow conjecture for polynomials and random matrices}, Letters in Mathematical Physics \textbf{115} (2025), no.~3, 60.

\bibitem{hall2025heatgaf}
Brian~C. Hall, Ching-Wei Ho, Jonas Jalowy, and Zakhar Kabluchko, \emph{The heat flow, {GAF}, and {SL}(2; {R})}, Indiana University Mathematics Journal \textbf{74} (2025), 1153--1206.

\bibitem{hall2025zeros}
\bysame, \emph{Zeros of random polynomials undergoing the heat flow}, Electronic Journal of Probability \textbf{30} (2025), 1--55.

\bibitem{hoefert2025zeros}
Antonia H{\"o}fert, Jonas Jalowy, and Zakhar Kabluchko, \emph{Zeros of polynomial powers under the heat flow}, arXiv preprint (2025), \url{https://arxiv.org/abs/2512.17808}.

\bibitem{hopf1950}
Eberhard Hopf, \emph{The partial differential equation $u_t + uu_x = \mu u_{xx}$}, Communications on Pure and Applied Mathematics \textbf{3} (1950), no.~3, 201--230.

\bibitem{HuangInauenMenon2023}
Ching-Peng Huang, Dominik Inauen, and Govind Menon, \emph{Motion by mean curvature and {D}yson {B}rownian motion}, Electronic Communications in Probability \textbf{28} (2023), article no.\ 34, 1--10.

\bibitem{HuangMcSwiggen2024}
Jiaoyang Huang and Colin McSwiggen, \emph{Asymptotics of generalized {B}essel functions and weight multiplicities via large deviations of radial {D}unkl processes}, Probability Theory and Related Fields \textbf{190} (2024), 941--1006.

\bibitem{JordanKinderlehrerOtto1998}
Richard Jordan, David Kinderlehrer, and Felix Otto, \emph{The variational formulation of the {Fokker--Planck} equation}, SIAM Journal on Mathematical Analysis \textbf{29} (1998), no.~1, 1--17.

\bibitem{kabluchko2025lee}
Zakhar Kabluchko, \emph{{L}ee--{Y}ang zeroes of the {C}urie--{W}eiss ferromagnet, unitary {H}ermite polynomials, and the backward heat flow}, Annales Henri Lebesgue \textbf{8} (2025), 1--34.

\bibitem{KornyikMichaletzky}
Mikl{\'o}s Kornyik and Gy{\"o}rgy Michaletzky, \emph{On the moments of roots of {L}aguerre-polynomials and the {M}archenko-{P}astur law}, Annales Universitatis Scientiarum Budapestinensis de Rolando E{\"o}tv{\"o}s Nominatae, Sectio Computatorica \textbf{46} (2017), 137--152.

\bibitem{lasalle1961stability}
J.P. LaSalle and S.~Lefschetz, \emph{Stability by {L}iapunov's direct method: With applications}, Mathematics in science and engineering, Academic Press, New York, 1961.

\bibitem{marcus2021polynomial}
Adam~W. Marcus, \emph{Polynomial convolutions and (finite) free probability}, arXiv preprint (2021), \url{https://arxiv.org/abs/2108.07054}.

\bibitem{MarcusSpielmanSrivastava2022}
Adam~W. Marcus, Daniel~A. Spielman, and Nikhil Srivastava, \emph{Finite free convolutions of polynomials}, Probability Theory and Related Fields \textbf{182} (2022), no.~3, 807--848.

\bibitem{OlshanetskyPerelomov1981}
Mikhail~A. Olshanetsky and Askold~M. Perelomov, \emph{Classical integrable finite-dimensional systems related to {Lie} algebras}, Physics Reports \textbf{71} (1981), 313--400.

\bibitem{Otto2001}
Felix Otto, \emph{The geometry of dissipative evolution equations: the porous medium equation}, Communications in Partial Differential Equations \textbf{26} (2001), no.~1-2, 101--174.

\bibitem{Pac03}
Tommaso Pacini, \emph{Mean curvature flow, orbits, moment maps}, Transactions of the American Mathematical Society \textbf{355} (2003), no.~8, 3343--3357.

\bibitem{RevuzYor1999}
Daniel Revuz and Marc Yor, \emph{Continuous martingales and {B}rownian motion}, 3rd ed., Grundlehren der mathematischen Wissenschaften, vol. 293, Springer, 1999.

\bibitem{Rosler-DunklOps}
M.~R\"osler, \emph{Dunkl operators: Theory and applications}, Orthogonal polynomials and special functions (Leuven, 2002) (E.~Koelink and W.~Van~Assche, eds.), Lecture Notes in Mathematics, vol. 1817, Springer-Verlag, Berlin, 2003, \url{https://arxiv.org/abs/math/0210366}, pp.~93--135.

\bibitem{RoslerVoit-Markov}
M.~R\"osler and M.~Voit, \emph{Markov processes related with {D}unkl operators}, Advances in Applied Mathematics \textbf{21} (1998), 575--643.

\bibitem{sokal2023motion}
Alan~D. Sokal, \emph{Motion of zeros of polynomial solutions of the one-dimensional heat equation: {A} first-order {C}alogero--{M}oser system}, Seminar at University College London (UCL), February 2023, \url{https://mediacentral.ucl.ac.uk/Player/0HiI68e9}. Accessed 29 May 2026.

\bibitem{tao2017heatflow}
Terence Tao, \emph{Heat flow and zeroes of polynomials}, What's new (blog), October 2017, \url{https://terrytao.wordpress.com/2017/10/17/heat-flow-and-zeroes-of-polynomials/}. Accessed 29 May 2026.

\bibitem{voiculescu1991limit}
Dan-Virgil Voiculescu, \emph{Limit laws for random matrices and free products}, Inventiones mathematicae \textbf{104} (1991), no.~1, 201--220.

\bibitem{voiculescu1994free}
\bysame, \emph{The analogues of entropy and of {F}isher's information measure in free probability theory, {II}}, Inventiones mathematicae \textbf{118} (1994), no.~1, 411--440.

\bibitem{voiculescu1992free}
Dan-Virgil Voiculescu, Kenneth~J Dykema, and Alexandru Nica, \emph{Free random variables: A noncommutative probability approach to free products with applications to random matrices, operator algebras and harmonic analysis on free groups}, CRM Monograph Series, vol.~1, American Mathematical Society, 1992.

\bibitem{Voit-CLT}
M.~Voit, \emph{Central limit theorems for multivariate {B}essel processes in the freezing regime}, Journal of Approximation Theory \textbf{239} (2019), 210--231.

\bibitem{VW-limits}
M.~Voit and J.~Woerner, \emph{Limit theorems for {B}essel and {D}unkl processes of large dimensions and free convolutions}, Stochastic Processes and their Applications \textbf{143} (2022), 207--253, \url{https://arxiv.org/abs/2009.13928}.

\bibitem{xu2023rectangular}
Jiaming Xu, \emph{Rectangular matrix additions in low and high temperatures}, Symmetry, Integrability and Geometry: Methods and Applications \textbf{22} (2026), 053, \url{https://arxiv.org/abs/2303.13812}.

\end{thebibliography}
\bibliographystyle{amsplain}

\end{document}